\newcommand*{\doi}[1]{\href{https://doi.org/\detokenize{#1}}{https://doi.org/\detokenize{#1}}}
\renewcommand*{\backref}[1]{}
\renewcommand*{\backrefalt}[4]{%
	\ifcase #1 (Not cited.)%
	\or        (Cited on page~#2.)%
	\else      (Cited on pages~#2.)%
	\fi}
\newcommand{\Z}{\mathbb Z}
\newcommand{\F}{\mathbb F}
\DeclareMathOperator{\Nb}{Nb}
\DeclareMathOperator{\Tr}{Tr}
\theoremstyle{plain}
\newtheorem{theorem}{Theorem}[section]
\newtheorem{lemma}[theorem]{Lemma}
\newtheorem{proposition}[theorem]{Proposition}
\newtheorem{corollary}[theorem]{Corollary}
\theoremstyle{definition}
\newtheorem{definition}[theorem]{Definition}
\newtheorem{remark}[theorem]{Remark}
\numberwithin{theorem}{section}
\numberwithin{equation}{section}
\numberwithin{table}{section}
\numberwithin{figure}{section}
\DeclareMathOperator{\image}{Im}
\begin{document}
\title{The combinatorial structure and value distributions of plateaued functions}

\author{Lukas K\"olsch$^1$ and Alexandr Polujan$^2$ \vspace{0.4cm} \ \\
$^1$ University of South Florida \\\tt lukas.koelsch.math@gmail.com\vspace{0.4cm}\\
$^2$ Otto-von-Guericke-Universit\"{a}t Magdeburg,\ \\ Universit\"{a}tsplatz 2, 39106, Magdeburg, Germany\ \\ \tt alexandr.polujan@ovgu.de
}

\date{\today}
\maketitle
\abstract{

We study combinatorial properties of plateaued functions $F \colon \mathbb{F}_p^n \rightarrow \mathbb{F}_p^m$. All quadratic functions, bent functions and most known APN functions are plateaued, so many cryptographic primitives rely on plateaued functions as building blocks. The main focus of our study is the interplay of the Walsh transform and linearity of a plateaued function, its differential properties, and their value distributions, i.e., the sizes of image and preimage sets. In particular, we study the special case of ``almost balanced'' plateaued functions, which only have two nonzero preimage set sizes, generalising, for instance, all monomial functions. We achieve several direct connections and (non)existence conditions for these functions, showing in particular that plateaued $d$-to-$1$ functions (and thus plateaued monomials) only exist for a very select choice of $d$, and we derive for all these functions their linearity as well as bounds on their differential uniformity. We also specifically study the Walsh transform of plateaued APN functions and their relation to their value distribution. \\[1mm]

\noindent\textbf{Keywords:} Plateaued function, APN function, Value distribution, Differential uniformity, Monomial function. \ \\


\thispagestyle{empty}

\section{Introduction}

Plateaued functions can be seen as a generalization of quadratic functions which play an important role in cryptography since they may have several desirable cryptographic properties such as high nonlinearity, resiliency, and low additive autocorrelation, to name a few.  More specifically, the set of plateaued functions forms a superclass of several classes of functions that are used as cryptographic primitives in a variety of contexts, namely linear functions, quadratic functions, bent functions, and most of the known examples and constructions of APN functions. All of these classes of functions have attracted a lot of attention from the research community due to their practical and theoretical importance. For example, the only known instance of APN permutations in an even number of variables (the existence of which is called the ``Big APN'' problem) was obtained from a quadratic (and hence plateaued) APN function~\cite{APNPermutation}. Bent functions are extremal combinatorial objects that are at the maximum distance from all affine functions. They have been recently used in the design of block ciphers~\cite{Wiemer20}. Quadratic mappings have been used as crucial building blocks of cryptographic primitives, for instance, the quadratic mapping $\chi$ is an important primitive used in various cryptographic algorithms like \textsc{Keccak}, \textsc{Ascon}, and \textsc{Xoodoo}; the study of its algebraic properties has recently been the subject of much research, see e.g.~\cite{KK2024,Schoone2024}.

Since the class of plateaued functions essentially generalizes the classes of quadratic and bent functions and contains many APN functions, it is essential to investigate further the natural properties of these functions within the general plateaued class as well. One of the current research directions in the analysis of cryptographically significant (vectorial) functions involves investigating the value distributions of such mappings. Of particular interest are the questions of whether a given class of mappings includes balanced functions (especially permutations), and if not, how far they deviate from being balanced; the latter can be measured by the value called imbalance~\cite{carlet2007nonlinearities}. For example, the problem of bijectivity of the mapping $\chi$ was resolved recently, see~\cite{AMG2024,Schoone2024}.  Value distributions of APN functions have been thoroughly investigated in~\cite{Koelsch2023}, while the case of bent functions was addressed in~\cite{KoelschPolujan2024}. As these studies indicate, in many cases, such functions can be almost balanced, meaning that all preimages, except one, have the same size. In turn, such functions can be considered, in some sense, the second best from the value distribution point of view. As recently demonstrated in~\cite{Koelsch2023,KoelschPolujan2024} and~\cite{AycyaWilfriedIEEE2025}, relying on the analysis of value distributions, it is possible to derive information about the combinatorial structure of the considered functions. 

The main aim of this paper is to investigate the value distributions of plateaued functions and provide a detailed combinatorial structure of these functions by developing new theoretical tools connecting the Walsh transform and imbalance. These results are then connected to cryptographic properties like nonlinearity and differential uniformity. \\

\noindent\textbf{Our contribution.} Firstly, we introduce general bounds on preimage set sizes of a mapping $F\colon\F_p^n\to\F_p^m$ in terms of its imbalance and the size of the image set (Theorem~\ref{th:cs_general}). With these bounds, we define almost balanced functions as the functions that achieve equality and have a nonzero imbalance (Definition~\ref{def: AB}). Thus, we extend the notion of almost balanced functions~\cite{KoelschPolujan2024} to a non-surjective case using spectral properties of the considered functions instead of differential ones. In this way, we complement the original results obtained in~\cite{KoelschPolujan2024}.

Secondly, we develop further connections between imbalance, value distributions, and the Walsh transform of functions $F\colon\F_p^n\to\F_p^m$. More precisely, we derive strong connections between the Walsh transform and the imbalance of almost balanced surjective functions (Theorem~\ref{thm:AB}). We also show that every function $F\colon\F_p^n\to\F_p^m$ with $m\le n/2$ is equivalent to a surjective function (Theorem~\ref{th: everything is surjective}) in the sense that there is an additive mapping $L$ such that $F+L$ is surjective.

Then, we concentrate on the combinatorial structure of plateaued functions. We consider $d$-to-$1$ functions which are special almost balanced functions $F \colon \F_p^n \rightarrow \F_p^m$ with the property: $F$ has exactly one preimage set of size $1$, and all other nonempty preimage sets have size exactly $d$. We show that $d$-to-$1$ vectorial functions can only be plateaued for very specific values of $d$ and derive precise information on the amplitudes of the components (Theorem~\ref{thm:platdto1}). 
With these results, we further concentrate on the analysis of plateaued monomials noting that monomial functions are important arithmetization-oriented primitives that are used in many hash functions used in zero-knowledge proof systems, like \textsc{Poseidon}~\cite{grassi2021poseidon} and \textsc{Rescue-Prime}~\cite{szepieniec2020rescue}. Specifically, we determine the possible shape of plateaued monomials $x^d$ on $\F_{p^n}$ (Corollary~\ref{cor:monom}) and show that plateaued mappings induced by monomials are either permutations or $(p^t+1)$-to-1 mappings. Additionally, we investigate differential properties of such functions (Theorems~\ref{thm:diff} and~\ref{thm:gcd1}) and provide explicit constructions of surjective almost balanced plateaued functions: monomials (Theorem~\ref{thm:gold}), and Maiorana-McFarland functions (Propositions~\ref{prop: MM1} and~\ref{prop: MM2}).

Lastly, we investigate the components of plateaued APN functions. As a highlight, we determine the combinatorial structure of plateaued APN functions with extremal values of imbalance (Proposition~\ref{prop: APNs with extreme imbalance}). Finally, we show that plateaued APN functions with the minimum image set size can have only a few possible value distributions (Theorem~\ref{prop: 2nd impossible}). \\

\noindent\textbf{The structure of the paper.} The paper is organized in the following way. In Section~\ref{sec: 2}, we give some preliminary results and introduce the necessary notation and definitions. In Section~\ref{sec:walsh}, we will use the Walsh transform to get information on image and preimage sets of functions $F\colon\F_p^n\to\F_p^m$. Because of the low number of distinct Walsh coefficients a plateaued function can attain, these tools will be especially useful and strong in this setting. In Section~\ref{sec 3}, we consider in detail the connection between value distributions, the Walsh transform and imbalance, a measure of distance from being balanced. Using the results of this section, we investigate value distributions of plateaued functions in Section~\ref{sec:plateaeud}. Specifically, in Subsection~\ref{sec 4.1} we investigate plateaued functions that are $d$-to-$1$ and get a connection to linearity and differential uniformity. Moreover, we completely determine the combinatorial structure of such functions in terms of their components noting that some technical details are given in Appendix~\ref{sec: Appendix} for the reader's convenience. In Subsection~\ref{sec 4.2}, we show that some known constructions of plateaued functions induce almost balanced functions. In Section~\ref{sec:APN}, we investigate the combinatorial structure of plateaued APN functions by determining possible component functions. In Section~\ref{sec: concl}, we conclude the paper and provide a list of further open problems. 

\section{Preliminaries and notation}\label{sec: 2}

Let $\F_p$ be the finite field with $p$ elements and $\F_p^n$ be the vector space of dimension $n$ over $\F_p$. For $x=(x_1,\ldots,x_n),y=(y_1,\ldots,y_n)\in\F_p^n$, we define the scalar product of $\F_p^n$ by $\langle x,y\rangle=x_1y_1+\cdots+x_ny_n$. If necessary, we endow the vector space $\F_p^n$ with the structure of the finite field $\F_{p^{n}}$; in this case, we define the scalar product of $\F_{p^n}$ by $\langle x,y\rangle =\Tr(xy)$, where $\Tr(z):=\Tr_1^n(z)$ is the absolute trace and  $\Tr^n_m(z)=\sum_{i=0}^{\frac{n}{m}-1} z^{p^{i\cdot m}}$ is the relative trace of $z \in \mathbb{F}_{p^n}$ from $\mathbb{F}_{p^n}$ into the subfield $\mathbb{F}_{p^m}$. 
If $n=2k$ is even, the vector space $\F_p^n$ can be identified with $\mathbb{F}_{p^k} \times \mathbb{F}_{p^k}$; in this case, we define the scalar product $\left\langle\left(u_1, u_2\right),\left(v_1, v_2\right)\right\rangle_n=\operatorname{Tr}_1^k\left(u_1 v_1+u_2 v_2\right)$.

For an odd prime $p$, the mappings $F\colon\F_p^n\rightarrow\F_p$ are called \textit{$p$-ary functions}, and for $p=2$, \textit{Boolean functions}. For $m\geq 2$, the mappings $F\colon\F_p^n\rightarrow\F_p^m$ are called \textit{vectorial functions}. For $b\in\F_p^m$, the function $F_b(x):=\langle b,F(x)\rangle$ is called a \textit{component function} of $F$. 

Identifying $\F_p^n$ with $\F_{p^n}$, we can uniquely represent any function $F\colon\F_p^n\to\F_p^n$  as a polynomial $F\colon\F_{p^n}\to\F_{p^n}$ of the form $F(x)=\sum_{i=0}^{p^{n}-1} a_{i} x^{i}$ with coefficients $a_i\in\F_{p^{n}}$. When $m|n$, any function $F\colon\F_p^n\to\F_p^m$ can be written as a polynomial $F\colon\F_{p^n}\to\F_{p^m}$ given by $F(x)=\Tr^{n}_{m}\left(\sum_{i=0}^{p^{n}-1} a_{i} x^{i}\right)$. This representation is called the \textit{univariate (trace) representation}, however, it is not unique in general. 

For a function $F\colon \F_p^n\to \F_p^m $, we call $F^{-1}(\beta)$ the \textit{preimage set} of $\beta \in \F_p^m$. We refer to the sizes of the preimages sets as the \textit{value distribution} of $F$. Finding the value distribution is thus equivalent to calculating the sizes of all preimages. A function $F\colon \F_p^n\to \F_p^m $ is called \textit{balanced} if $|F^{-1}(\beta)|=p^{n-m}$, for all $\beta \in \F_p^m$.

A mapping $A\colon\F_p^n\to\F_p^m$ is called \textit{affine} if it can be written as {$A(x)=L(x)+c$ for some $c \in \F_p^m$} and an $\F_p$-linear function $L \colon \F_p^n \rightarrow \F_p^m$. This means that for any $a\in\F_p^m$ and for any $b\in\F_p^m$ the equation $A(x+a)-A(x)=b$ has either $0$ or $p^n$ solutions $x\in\F_p^n$. In the following, we will be interested in functions that are opposite to affine in the sense that the equation $F(x+a)-F(x)=b$ has for fixed non-zero $a$ and any fixed $b$ only few solutions.

We define the \emph{differential uniformity }$\delta$ of a function $F\colon  \F_p^n \rightarrow \F_p^m$ as the maximum number of solutions of the equation $F(x+a)-F(x)=b$ where $a$ and $b$ range over $\F_p^n\setminus \{0\}$ and $\F_p^m$, respectively. $F$ is then said to be \textit{perfect nonlinear} (or simply \textit{bent}) if $|\{x \in \F_p^n\colon  F(x+a)-F(x)=b\}|=p^{n-m}$ holds for all $a \in \F_p^n \setminus\{0\}$  and $b \in \F_p^m$. The case $p=2$ can be considered as a special one since bent functions $F\colon\F_2^n\to\F_2^m$ exist if and only if $n$ is even and $m\le n/2$, see~\cite{KaiNy91}. Moreover, the number of solutions of the equation $F(x+a)+F(x)=b$ is always even since the solutions $x$ and $x+a$  from $\F_2^n$ are always coming in pairs. In the case $n=m$ (especially attractive from a cryptographic perspective), the definition of (almost) perfect nonlinearity reflects naturally the minimum possible number of such pairs. A function $F\colon  \F_2^n \rightarrow \F_2^n$ is said to be \textit{almost perfect nonlinear} (\textit{APN} for short) if $|\{x \in \F_2^n\colon  F(x+a)+F(x)=b\}|\le 2$ holds for all $a \in \F_2^n \setminus\{0\}$  and $b \in \F_2^n$.

Our primary tool for handling the mappings $F\colon\F_p^n\to\F_p^m$ and their value distributions is the Walsh transform, which is defined as follows. For a $p$-ary function $f\colon\F_p^n\to\F_p$, the Walsh transform is the complex-valued function $W_f\colon\F_{p}^n\to \mathbb{C}$ defined by
$$
W_f(a)=\sum_{x \in \F_p^n} \zeta_p^{f(x) -\langle a, x\rangle} \in \Z[\zeta_p], \quad\mbox{where } \zeta_p=e^{2 \pi i / p} \quad\mbox{and }i^2=-1.
$$
As it can be readily seen, the Walsh transform for $a=0$ is directly connected to the image set of $f$ and the sizes of preimages. Thus, the Walsh transform at $0$ is one of the major tools we will use in this paper.

For vectorial functions $F\colon\F_p^n\to\F_p^m$, the Walsh transform is defined using the notion of component functions as $W_F(b,a)=W_{F_b}(a)$ for all $a\in\F_p^n,b\in\F_p^m$. One can completely describe (almost) perfect nonlinearity by means of the Walsh transform as follows. A function $F\colon\F_2^n\to\F_2^n$ is APN if and only if the fourth power moment of the Walsh transform achieves the bound $\sum_{a \in \F_2^n, b \in \F_2^n \backslash\{0\}}\left(W_F(b,a)\right)^4 \geq\left(2^n-1\right) 2^{3 n+1}$ with equality, see~\cite[Corollary 1]{berger2006almost}. A function $f\colon\F_p^n\to \F_p$ is a bent function if and only if the Walsh transform satisfies $|W_f(b)|=p^{n / 2}$ for all $b \in \F_p^n$. For $m\ge 2$, a function $F\colon\F_p^n\to \F_p^m$ is vectorial bent if and only if for all $b \in \F_p^m\setminus\{0\}$ the component function $F_b\colon\F_p^n\to \F_p$ is bent, see~\cite{MeierStaffelbach89,KaiNy91}.

More generally, the \emph{linearity} of a function $F\colon\F_p^n\to\F_p^m$ is defined as $\mathcal{L}(F)=\max\limits_{a \in \F_p^n, b \in \F_p^m\setminus\{0\}}|W_F(b,a)|$ and describes how far a function is from being  {affine}. As such, the linearity is a major property of building blocks in symmetric cryptography.

The class of plateaued functions was introduced by Zheng and Zhang~\cite{ZhengZhang1999} for the case $p=2$, $m=1$. Later it was generalized for the $p$-ary case~\cite{AycyaWilfried2013} and also studied in the vectorial setting~\cite{carlet2015boolean,Mesnager2018}. The class of plateaued functions includes bent and affine functions (as extremal cases), all quadratic functions (i.e., mappings $F\colon\F_p^n\to\F_p^m$ with the property $x\mapsto F(x+a)-F(x)$ is affine for all $a\in\F_p^n$) as well as most of the constructions and the known examples of APN functions. It is defined in the following way.

\begin{definition}
    A function $f\colon \mathbb{F}_p^n \rightarrow \mathbb{F}_p$ is called $t$-plateaued, if for all $a \in \mathbb{F}_p^n$  it holds that $\left|W_f(a)\right| \in\left\{0,p^{\frac{n+t}{2}}\right\}$. The value $p^{\frac{n+t}{2}}$ is called the amplitude of a $t$-plateaued function $f$. 1-plateaued functions on $\mathbb{F}_p^n$ with $n$ odd and 2-plateaued functions on $\mathbb{F}_2^n$ with $n$ even are called semi-bent. A vectorial function $F\colon \mathbb{F}_p^n \rightarrow \mathbb{F}_p^m$ is said to be $t$-plateaued if all its component functions $F_b$ with $b \neq 0$ are $t$-plateaued. In this case we also say $F$ is plateaued with single amplitude. If all component functions $F_b$ of a function $F\colon\F_p^n\to\F_p^m$ are $s_b$-plateaued (not necessarily with the same amplitude), then $F$ is called a plateaued function.
\end{definition}
Clearly, knowing all amplitudes of a plateaued function immediately determines the linearity of the function.

In a recent article~\cite{Koelsch2023}, the authors developed the theory of value distributions of perfect nonlinear functions $F\colon\F_p^n\to\F_p^m$ relying mostly on differential properties of the considered functions. However, such an approach can not be used directly when little information about the differential properties of the considered functions is given. In the following section, we provide an alternative approach to value distributions of mappings $F\colon\F_p^n\to\F_p^m$ which is based solely on the knowledge of the Walsh transform at $a=0$.

\section{Bounds on the cardinality of preimage sets in terms of the Walsh transform at $a=0$} \label{sec:walsh}

There are two main tools we use to connect the Walsh transform to (pre)image sets. Let $F \colon \F_p^n \rightarrow \F_p^m$. Firstly, we have that, by orthogonality of characters, see e.g.~\cite[Theorem 3.3.]{hou2018lectures}
\begin{equation} \label{eq:first}
    \sum_{b \in \F_{p}^m}W_F(b,0) = \sum_{x \in \F_{p}^n}\sum_{b \in \F_{p}^m} \zeta_p^{\langle b,F(x)\rangle} = p^m |F^{-1}(0)|.
\end{equation}

Since most properties of functions we are interested in (like plateauedness, APN-ness, etc.) are invariant under the addition of constants, this information does not just apply to preimages of $0$ but can be transferred to any preimage set. Indeed, if $a \in \F_p^m$, we have for the function $G(x)=F(x)-a$
\begin{equation*} 
    \sum_{b \in \F_{p}^m}W_{G}(b,0) = \sum_{x \in \F_{p}^n}\sum_{b \in \F_{p}^m} \zeta_p^{\langle b,F(x)-a\rangle} =p^m |F^{-1}(a)|.
\end{equation*}

Secondly, the size of the set $|\{(x,y) \in \F_p^n\times \F_p^n \colon F(x)=F(y)\}|$ that is clearly intimately connected to (pre)image set sizes can be modelled by the second moment of the Walsh transform  {at $a=0$}. Indeed:
\begin{align}
       |\{(x,y) \in \F_p^n\times \F_p^n \colon F(x)=F(y)\}|&=|\{(x,y) \in \F_p^n\times \F_p^n \colon F(x)-F(y)=0\}|  \nonumber \\
       &=\frac{1}{p^m}\sum_{x,y \in \F_p^n} \sum_{b \in \F_p^m} \zeta_p^{\langle b,F(x)-F(y) \rangle} \nonumber \\
       &= \frac{1}{p^m} \sum_{b\in \F_p^m} \left(\sum_{x\in \F_p^n} \zeta_p^{\langle b,F(x) \rangle}\cdot \sum_{y\in \F_p^n} \zeta_p^{\langle b,-F(y) \rangle}\right)\nonumber \\
       &=\frac{1}{p^m} \sum_{b \in \F_p^m}W_F(b,0)\cdot \overline{W_F(b,0)} =\frac{1}{p^m} \sum_{b \in \F_p^m} |W_F(b,0)|^2 \nonumber \\
       &=p^{2n-m}+\frac{1}{p^m} \sum_{b \in \F_p^m\setminus\{0\}} |W_F(b,0)|^2, \label{eq:secondmoment}
    \end{align}
     {where we use orthogonality of characters in the second step.}
Of course, investigating the set  $|\{(x,y) \in \F_p^n\times \F_p^n \colon F(x)=F(y)\}|$ is natural whenever image sets of functions are of interest, and many papers have used this set (and in some cases its connection to the Walsh transform) in the past decades, see e.g.~\cite{uchiyama1954nombre,voloch1989number,carlet2007nonlinearities,carlet2011relating,KoelschPolujan2024,Koelsch2023,Coulter2014} for a non-exhaustive list. Noting that 
    \begin{equation} \label{eq:walsh}
        \sum_{\beta \in \F_{p}^m} |F^{-1}(\beta)|^2=|\{(x,y) \in \F_p^n\times \F_p^n \colon F(x)=F(y)\}|,
    \end{equation}
    this immediately yields a result connecting preimage set sizes to the Walsh transform of a function  {at $a=0$}:

\begin{proposition} \label{prop:starting}
Let $F \colon \F_{p}^n \rightarrow \F_{p}^m$ be a function. Then 
\[\sum_{\beta \in \F_{p}^m} |F^{-1}(\beta)|^2 = p^{2n-m}+\frac{1}{p^m} \sum_{b \in \F_p^m\setminus\{0\}} |W_F(b,0)|^2.\]
\end{proposition}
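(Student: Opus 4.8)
The plan is to compute the single integer $N := |\{(x,y)\in\F_p^n\times\F_p^n : F(x)=F(y)\}|$ in two different ways and equate the two resulting expressions.

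First I would partition the set of pairs counted by $N$ according to their common image value: for each $\beta\in\F_p^m$ the pairs $(x,y)$ with $F(x)=F(y)=\beta$ number exactly $|F^{-1}(\beta)|^2$, and every pair counted by $N$ lies in exactly one of these classes. Summing over $\beta$ gives $N=\sum_{\beta\in\F_p^m}|F^{-1}(\beta)|^2$, which is precisely the identity \eqref{eq:walsh}.

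Second I would rewrite the defining condition $F(x)-F(y)=0$ via orthogonality of the additive characters of $\F_p^m$: for $z\in\F_p^m$ one has $\tfrac1{p^m}\sum_{b\in\F_p^m}\zeta_p^{\langle b,z\rangle}$ equal to $1$ if $z=0$ and $0$ otherwise. Substituting $z=F(x)-F(y)$, summing over all $(x,y)\in\F_p^n\times\F_p^n$, and interchanging the order of summation factors the inner double sum over $x$ and $y$ as a product $\bigl(\sum_x\zeta_p^{\langle b,F(x)\rangle}\bigr)\bigl(\sum_y\zeta_p^{-\langle b,F(y)\rangle}\bigr)=W_F(b,0)\,\overline{W_F(b,0)}=|W_F(b,0)|^2$. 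Pulling out the $b=0$ term, which contributes $(p^n)^2/p^m=p^{2n-m}$, then gives $N=p^{2n-m}+\tfrac1{p^m}\sum_{b\in\F_p^m\setminus\{0\}}|W_F(b,0)|^2$; this is exactly the chain of equalities displayed in \eqref{eq:secondmoment}.

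Equating the two values of $N$ yields the claim. There is essentially no obstacle: the only slightly delicate point is the character-orthogonality step together with the factorization of the double sum into a modulus squared, and both of these are already carried out in \eqref{eq:secondmoment}. So the proof amounts to observing that \eqref{eq:walsh} and \eqref{eq:secondmoment} count the same quantity.
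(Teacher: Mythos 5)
Your proposal is correct and is exactly the paper's argument: the paper establishes the character-orthogonality computation in Eq.~\eqref{eq:secondmoment} and the partition-by-image-value identity in Eq.~\eqref{eq:walsh} immediately before the proposition, and its proof simply combines the two, just as you do. No differences to report.
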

\begin{proof}
    Follows immediately from a combination of Eqs.~\eqref{eq:secondmoment} and~\eqref{eq:walsh}.
    \end{proof}

\begin{remark}
    Proposition~\ref{prop:starting} is a generalization of known results with similar statements, for instance for bent functions, we have $|W_F(b,0)|^2=p^n$ for all $b \in \F_p^m\setminus\{0\}$ and we recover the result from~\cite[Proposition 2.1.]{KoelschPolujan2024} that was proven using different means.
\end{remark}

The sum of the second moment of the Walsh transform  {at $a=0$},  $\frac{1}{p^m}\sum_{b \in \F_p^m\setminus\{0\}} |W_F(b,0)|^2$ that appears in Proposition~\ref{prop:starting} will play a crucial role throughout this paper, we thus introduce the notation
\[\Nb_F:=\frac{1}{p^m}\sum_{b \in \F_p^m\setminus\{0\}} |W_F(b,0)|^2.\]
This notation was already used by Carlet and Ding in~\cite{carlet2007nonlinearities} for vectorial Boolean functions, and is called the \emph{imbalance} of $F$.
Note that $\Nb_F$ has to be an integer by Proposition~\ref{prop:starting} as long as $m \leq 2n$. Immediate bounds on $\Nb_F$ are (see~\cite[Proposition 1]{carlet2007nonlinearities}, they can also be derived from the definition above with ease) $0 \leq \Nb_F \leq p^{2n}-p^{2n-m}$. The lower bound is achieved with equality if and only if $F$ is balanced, and the upper bound if and only if $F$ is constant.

For the sake of brevity, denote for a function $F \colon \F_p^n \rightarrow \F_p^m$ the preimage set sizes by $X_1,X_2,\dots,X_{p^m}$, where we order the $X_i$ without loss of generality such that $0<X_1\leq X_2 \leq \dots X_{|\image(F)|}$ and $X_i=0$ for all $i>|\image(F)|$.

\begin{theorem} \label{th:cs_general}
    Let $F \colon \F_{p}^n \rightarrow \F_{p}^m$ be a function. For each $\beta \in \F_p^m$ with $|F^{-1}(\beta)|>0$ we have
    \begin{equation}\label{eq: bounds}
        \frac{p^n}{|\image(F)|}-\Xi(F)\le |F^{-1}(\beta)| \le \frac{p^n}{|\image(F)|}+\Xi(F),\quad \mbox{where}
    \end{equation}
    \begin{equation}\label{eq: defect}
       \Xi(F)=\sqrt{\frac{(|\image(F)|-1)  \left( |\image(F)| \Nb_F -p^{2 n-m} \left(p^m-|\image(F)|\right)\right)}{|\image(F)|^2}}.
    \end{equation}
    Moreover, if $\beta \in \F_p^m$ satisfies one of the inequalities above with equality then $|F^{-1}(\alpha)|=\frac{p^n-|F^{-1}(\beta)|}{|\image(F)|-1}$ for all $\alpha \in \image(F)\setminus \{\beta\}$.
\end{theorem}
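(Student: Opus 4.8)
The plan is to combine the two elementary moment identities for the preimage sizes with the Cauchy--Schwarz inequality applied to all but one preimage. Write $N=|\image(F)|$ and recall that $\sum_{\beta\in\F_p^m}|F^{-1}(\beta)|=p^n$ while, by Proposition~\ref{prop:starting}, $\sum_{\beta\in\F_p^m}|F^{-1}(\beta)|^2=p^{2n-m}+\Nb_F$. Fix $\beta\in\F_p^m$ with $|F^{-1}(\beta)|>0$ and set $x:=|F^{-1}(\beta)|$. If $N=1$ the claim is trivial, since then $x=p^n=p^n/N$ and $\Xi(F)=0$, so assume $N\ge 2$. The remaining $N-1$ nonempty preimage sets then have sizes summing to $p^n-x$ and squares summing to $p^{2n-m}+\Nb_F-x^2$.

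First I would apply the Cauchy--Schwarz (equivalently, the quadratic--arithmetic mean) inequality to those $N-1$ positive integers, obtaining $(N-1)\bigl(p^{2n-m}+\Nb_F-x^2\bigr)\ge(p^n-x)^2$, with equality precisely when all $N-1$ of them are equal. Expanding and collecting terms rewrites this as the quadratic inequality
\[
Nx^2-2p^nx+\bigl(p^{2n}-(N-1)(p^{2n-m}+\Nb_F)\bigr)\le 0 .
\]
Since the leading coefficient $N$ is positive, $x$ must lie between the two roots of the associated quadratic. A direct computation shows that its discriminant equals $4N^2\Xi(F)^2$; explicitly,
\[
p^{2n}-N\bigl(p^{2n}-(N-1)(p^{2n-m}+\Nb_F)\bigr)=(N-1)\bigl(N\Nb_F-p^{2n-m}(p^m-N)\bigr)=N^2\Xi(F)^2 ,
\]
so the roots are exactly $\tfrac{p^n}{N}\pm\Xi(F)$. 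This gives the bounds in \eqref{eq: bounds} (and, incidentally, shows $\Xi(F)$ is a well-defined real number, since a real $x$ satisfies the inequality, forcing the discriminant to be nonnegative).

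For the final assertion, note that $x$ meets one of the two bounds in \eqref{eq: bounds} exactly when it is a root of the quadratic, i.e.\ exactly when the Cauchy--Schwarz inequality above is an equality; by the equality case of Cauchy--Schwarz this happens if and only if the $N-1$ preimage sizes other than $|F^{-1}(\beta)|$ are all equal, and since they sum to $p^n-x$ each of them equals $\tfrac{p^n-x}{N-1}=\tfrac{p^n-|F^{-1}(\beta)|}{|\image(F)|-1}$, as claimed. The only mildly delicate point is the algebraic verification that the discriminant collapses precisely to $4N^2\Xi(F)^2$ (in particular matching the factor $|\image(F)|-1$ and the $p^{2n-m}(p^m-|\image(F)|)$ term); everything else is a routine bookkeeping of the two moment identities and the equality case of Cauchy--Schwarz.
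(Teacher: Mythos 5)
Your proof is correct and follows essentially the same route as the paper: both isolate one preimage size, apply Cauchy--Schwarz to the remaining $|\image(F)|-1$ sizes, combine with the two moment identities from Proposition~\ref{prop:starting}, and solve the resulting quadratic, with the equality case of Cauchy--Schwarz giving the final assertion. Your discriminant computation checks out, so nothing further is needed.
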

\begin{proof}
   It is easy to observe that the theorem holds if $|\image(F)|\leq 1$, we can thus assume $|\image(F)|\geq 2$ for the rest of the proof.
   
   Note that $\sum_{i=1}^{|\image(F)|}X_i=p^n$ since the set of preimages partitions $\F_p^n$.
   By the Cauchy-Schwarz inequality, we  then have for any $1 \leq j \leq |\image(F)|$
   \[\sum_{\substack{i=1\\i\neq j}}^{|\image(F)|} X_i^2 \geq  \left(\sum_{\substack{i=1\\i\neq j}}^{|\image(F)|} X_i\right)^2 \cdot \frac{1}{|\image(F)|-1} = (p^n-X_j)^2\cdot \frac{1}{|\image(F)|-1},\]
		with equality if and only if all $X_i$, $1\leq i\leq |\image(F)|$, $i\neq j$ are identical. Then, applying Proposition~\ref{prop:starting},
		\[p^{2n-m} + \Nb_F=\sum_{i=1}^{|\image(F)|} X_i^2 =X_j^2+\sum_{\substack{i=1\\i\neq j}}^{|\image(F)|} X_i^2 \geq X_j^2+\frac{(p^n-X_j)^2}{|\image(F)|-1}.\]
		This inequality is quadratic in $X_j$ and can be solved with elementary techniques, the result is
  $$\frac{p^n}{|\image(F)|}-\Xi(F)\le X_j \le \frac{p^n}{|\image(F)|}+\Xi(F),\quad \mbox{where}$$
  $$\Xi(F)=\sqrt{\frac{(|\image(F)|-1)  \left( |\image(F)| \Nb_F -p^{2 n-m} \left(p^m-|\image(F)|\right)\right)}{|\image(F)|^2}},$$
  proving the first part of the Theorem. If $\beta \in  \F_p^m$ satisfies Eq.~\eqref{eq: bounds} with equality, then equality has to hold also in the Cauchy-Schwarz inequality.
  Consequently, for all $\alpha \in \image(F)\setminus\{\beta\}$, we have that  $|F^{-1}(\alpha)|=\frac{p^n-|F^{-1}(\beta)|}{|\image(F)|-1}$ as claimed.
\end{proof}

\begin{remark}
    For a function $F \colon \F_{p}^n \rightarrow \F_{p}^m$, the value $\Xi(F)$ defined by Eq.~\eqref{eq: defect} can be considered as an imbalance defect of $F$, since $F$ is not balanced if $\Xi(F)\neq0$. We also note that the imbalance defect is always a real number. Indeed, the value under the square root is negative if and only if $ |\image(F)| \Nb_F -p^{2 n-m} \left(p^m-|\image(F)|\right)<0$, which (using Proposition~\ref{prop:starting}) is equivalent to  
    \[\sum_{\beta \in \F_p^m}|F^{-1}(\beta)|^2-\frac{p^{2n}}{|\image(F)|}<0.\]
    But elementary calculations show that 
    \[\sum_{\beta \in \F_p^m}|F^{-1}(\beta)|^2-\frac{p^{2n}}{|\image(F)|} =  {\sum_{\beta \in \image(F)}|F^{-1}(\beta)|^2-\frac{p^{2n}}{|\image(F)|} =}\sum_{\beta \in  {\image(F)}}\left(|F^{-1}(\beta)|-\frac{p^n}{|\image(F)|}\right)^2 \geq 0,\]
    giving the desired contradiction.
\end{remark}

If no further information on the size of the image set of $F$ is known, bounds on the preimage set sizes can still be derived just from $\Nb_F$ by following the proof of Theorem~\ref{th:cs_general}, substituting $|\image(F)|$ with $p^m$ throughout. The result is:
\begin{proposition} \label{prop:cs}
    Let $F \colon \F_{p}^n \rightarrow \F_{p}^m$ be a function. For each $\beta \in \F_p^m$ we have
    \[p^{n-m}-\sqrt{\left(1-\frac{1}{p^m}\right)\Nb_F} \leq |F^{-1}(\beta)| \leq p^{n-m}+\sqrt{\left(1-\frac{1}{p^m}\right)\Nb_F}.\]
    Moreover, if a $\beta \in \F_p^m$ satisfies one of the inequalities above with equality then $|F^{-1}(\alpha)|=\frac{p^n-|F^{-1}(\beta)|}{p^m-1}$ for all $\alpha \in \F_p^m\setminus \{\beta\}$.
\end{proposition}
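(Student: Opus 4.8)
The plan is to follow the proof of Theorem~\ref{th:cs_general} almost verbatim, the only change being that the Cauchy--Schwarz estimate is applied to all $p^m$ preimage set sizes $X_1,\dots,X_{p^m}$ (some of which may be zero) rather than only to the $|\image(F)|$ nonzero ones; in effect one substitutes $p^m$ for $|\image(F)|$ throughout. Since $\Nb_F$ depends only on $F$ and not on which preimage is singled out, and since adding a constant to $F$ leaves $\Nb_F$ unchanged, it suffices to argue for a single fixed $\beta$.

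First I would recall that $\sum_{i=1}^{p^m} X_i = p^n$, because the preimage sets partition $\F_p^n$. Fix an index $j$ corresponding to the prescribed $\beta \in \F_p^m$. By the Cauchy--Schwarz inequality applied to the $p^m-1$ numbers $X_i$ with $i \neq j$,
\[
\sum_{\substack{i=1 \\ i \neq j}}^{p^m} X_i^2 \;\ge\; \frac{1}{p^m-1}\Bigl(\,\sum_{\substack{i=1\\ i\neq j}}^{p^m} X_i\Bigr)^{\!2} \;=\; \frac{(p^n - X_j)^2}{p^m-1},
\]
with equality precisely when all $X_i$, $i \neq j$, coincide. Combining this with Proposition~\ref{prop:starting}, which gives $\sum_{i=1}^{p^m} X_i^2 = p^{2n-m} + \Nb_F$, yields
\[
X_j^2 + \frac{(p^n - X_j)^2}{p^m - 1} \;\le\; p^{2n-m} + \Nb_F .
\]

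Next I would solve this quadratic inequality in $X_j$. Multiplying through by $p^m-1$ and simplifying, the linear and constant terms are arranged so that the inequality collapses to $\bigl(X_j - p^{n-m}\bigr)^2 \le \bigl(1 - \tfrac{1}{p^m}\bigr)\Nb_F$, which is exactly the two-sided bound claimed. For the equality statement: if some $\beta$ attains one of the two bounds, then the displayed quadratic inequality is an equality, hence so is the Cauchy--Schwarz step, which forces $X_i = \frac{p^n - X_j}{p^m - 1}$ for every $i \neq j$; translating back, $|F^{-1}(\alpha)| = \frac{p^n - |F^{-1}(\beta)|}{p^m - 1}$ for all $\alpha \in \F_p^m \setminus \{\beta\}$.

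The argument has no genuine obstacle; the only point deserving a moment's attention is that, unlike in Theorem~\ref{th:cs_general}, the statement asserts the bounds for \emph{every} $\beta \in \F_p^m$, including those with $|F^{-1}(\beta)| = 0$. This is automatic: the Cauchy--Schwarz step above is valid regardless of whether $X_j = 0$, and in the case $X_j = 0$ the resulting inequality $\frac{p^{2n}}{p^m-1} \le p^{2n-m} + \Nb_F$ forces $\sqrt{(1-1/p^m)\Nb_F} \ge p^{n-m}$, so that the lower bound $p^{n-m} - \sqrt{(1-1/p^m)\Nb_F}$ is nonpositive and the claimed inequality holds trivially. One should also record that $p^m \ge 2$ (since $m \ge 1$), so dividing by $p^m - 1$ is legitimate.
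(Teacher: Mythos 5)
Your proposal is correct and is essentially the paper's own argument: the paper derives Proposition~\ref{prop:cs} precisely by rerunning the proof of Theorem~\ref{th:cs_general} with $|\image(F)|$ replaced by $p^m$, which is what you do. Your additional check that the bounds hold vacuously when $|F^{-1}(\beta)|=0$ is a sensible (if not strictly necessary) remark, since the quadratic argument already covers $X_j=0$.
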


\begin{remark}
    Clearly, for balanced functions $\Nb_F=0$. For bent functions $\Nb_F=\frac{1}{p^m}(p^m-1)p^n=p^n-p^{n-m}$ and $\sqrt{(1-p^{-m})\Nb_F} = \sqrt{p^n-p^{n-m}-p^{n-m}+p^{n-2m}}=\sqrt{(p^{n/2}-p^{n/2-m})^2}=p^{n/2}-p^{n/2-m}$, recovering the result from~\cite[Theorem 2.5.]{KoelschPolujan2024}.
\end{remark}

It is well-known that a function $F \colon \F_{p}^n \rightarrow \F_{p}^m$  is balanced if and only if $W_F(b,0)=0$ for all  $b \in \F_{p}^m \setminus \{0\}$. Proposition~\ref{prop:cs} gives a quantitative version of this well-known result: It gives a bound that indicates how far away a function can be from being balanced, based on the value of $\Nb_F$. Note that the bounds in Proposition~\ref{prop:cs} are in general tight, vectorial bent functions having preimages satisfying the bounds  above with equality were investigated in~\cite{KoelschPolujan2024} in detail. Following the original paper~\cite{KoelschPolujan2024}, we introduce the notion of almost balanced functions as follows.

\begin{definition}\label{def: AB}
    Let  $F \colon \F_{p}^n \rightarrow \F_{p}^m$ be a function with $\Xi(F)\neq 0$. Then, $F$ is called \textit{almost balanced} if there is a $\beta \in \F_p^m$ that achieves one of the bounds in~\eqref{eq: bounds} with equality. If the lower bound is achieved, $F$ is called almost balanced of type $(-)$. If the upper bound is achieved, $F$ is called almost balanced of type $(+)$.
\end{definition}
The motivation behind the term ``almost balanced'' is that all elements in the image set of these functions except for one have the same amount of preimages. These functions are in this sense the ``closest'' to being balanced.
 \begin{remark}
    We want to note that if the image set size of a function is very small, the almost balanced property is not particularly interesting. For instance, all functions with only $2$ elements in the image set are almost balanced by default. All explicit families of almost balanced functions we study later have large image sets and thus do not fall into these trivial cases.
\end{remark}
In the following section, we study almost balanced functions in more detail.

\section{Imbalance, value distributions, and the Walsh transform}\label{sec 3}

It is an obvious question for which values of $\Nb_F$ the inequalities in Proposition~\ref{prop:cs} can hold with equality. It turns out that strong conditions occur and we can make substantial statements on the Walsh transform  {at $a=0$}, and thus the imbalance $\Nb_F$ of the function.

\begin{theorem} \label{thm:AB}
       Let $F \colon \F_p^n \rightarrow \F_p^m$ be a surjective, almost balanced function where $F^{-1}(0)$ is the preimage with unique preimage size. Then 
    \begin{enumerate}[(a)]
        \item  $W_F(b,0)=W_F(\mathbf{e}_1,0)$ for all $b \in \F_p^m\setminus\{0\}$, where $\mathbf{e}_1=(1,0\dots,0)\in \F_p^m$,
        \item $W_F(b,0) \in \Z \setminus\{0\}$  for all $b \in \F_p^m$,
        \item $\Nb_F=\frac{p^m-1}{p^m}|W_F(\mathbf{e}_1,0)|^2$,
        \item $p^m|W_F(\mathbf{e}_1,0)$,
    \end{enumerate} 
    and
    \[|F^{-1}(0)|=p^{n-m} \pm \frac{p^m-1}{p^m}|W_F(\mathbf{e}_1,0)| \text{ and }|F^{-1}(\beta)|=p^{n-m} \mp \frac{1}{p^m}|W_F(\mathbf{e}_1,0)|\]
    for any $\beta \in \F_p^m\setminus \{0\}.$
\end{theorem}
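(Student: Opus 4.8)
The plan is to combine the equality case of the Cauchy--Schwarz bound from Theorem~\ref{th:cs_general} with the two exact-counting identities from the start of Section~\ref{sec:walsh}: identity~\eqref{eq:first} (applied after translation to each fibre), which expresses $p^m|F^{-1}(a)|$ as $\sum_{b}W_{F}(b,0)$ with a phase twist by $a$, and Proposition~\ref{prop:starting}, which expresses $\sum_\beta|F^{-1}(\beta)|^2$ in terms of $\Nb_F$. Since $F$ is surjective, $|\image(F)|=p^m$, so $\Xi(F)$ simplifies: $\Xi(F)=\sqrt{(p^m-1)\Nb_F}/p^m = \sqrt{(1-p^{-m})\Nb_F}$, matching Proposition~\ref{prop:cs}. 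By hypothesis $F^{-1}(0)$ is the fibre of exceptional size, so by the equality clause of Theorem~\ref{th:cs_general} all fibres $F^{-1}(\beta)$ with $\beta\neq 0$ have the common value $s:=\frac{p^n-|F^{-1}(0)|}{p^m-1}$, and $|F^{-1}(0)| = p^{n-m}\pm\Xi(F)$ while $s = p^{n-m}\mp\frac{1}{p^m-1}\Xi(F)$ (the sign being $+$ for type $(+)$, $-$ for type $(-)$; this last formula follows from $|F^{-1}(0)|+(p^m-1)s=p^n$).

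Next I would extract the Walsh coefficients. Fix $\beta\in\F_p^m\setminus\{0\}$ and apply the translated version of~\eqref{eq:first} to both $a=0$ and $a=\beta$:
\[
\sum_{b\in\F_p^m}W_F(b,0) = p^m|F^{-1}(0)|,\qquad \sum_{b\in\F_p^m}\zeta_p^{-\langle b,\beta\rangle}W_F(b,0)=p^m|F^{-1}(\beta)| = p^m s.
\]
Subtracting, $\sum_{b\neq 0}(1-\zeta_p^{-\langle b,\beta\rangle})W_F(b,0) = p^m(|F^{-1}(0)|-s)$, which is a fixed nonzero integer independent of $\beta$. The key structural input is that the right-hand side is the same for every $\beta\neq 0$; averaging these $p^m-1$ equations over $\beta$ and using $\sum_{\beta\in\F_p^m\setminus\{0\}}\zeta_p^{-\langle b,\beta\rangle}=-1$ for $b\neq 0$ gives $\sum_{b\neq 0}W_F(b,0)\bigl(1-\frac{-1}{p^m-1}\bigr)$ type identities — more cleanly, I would instead argue directly: from $\sum_{b\neq 0}W_F(b,0)=p^m(|F^{-1}(0)|-p^{n-m})$ (subtract $p^m\cdot p^{n-m}$ from the $a=0$ identity and note $W_F(0,0)=p^n$) and from $\sum_{b\neq 0}|W_F(b,0)|^2 = p^m\Nb_F$ (definition of $\Nb_F$), I want to show all $W_F(b,0)$ are equal. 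This is where the equality case of Cauchy--Schwarz enters a second time: by Cauchy--Schwarz on the $p^m-1$ complex numbers $W_F(b,0)$,
\[
\Bigl|\sum_{b\neq 0}W_F(b,0)\Bigr|^2 \le (p^m-1)\sum_{b\neq 0}|W_F(b,0)|^2 = (p^m-1)p^m\Nb_F,
\]
and plugging in $\sum_{b\neq 0}W_F(b,0) = p^m(|F^{-1}(0)|-p^{n-m}) = \pm p^m\Xi(F) = \pm p^m\sqrt{(1-p^{-m})\Nb_F}$ gives $|{\cdot}|^2 = p^{2m}(1-p^{-m})\Nb_F = (p^m-1)p^m\Nb_F$, i.e. equality holds. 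Hence all $W_F(b,0)$, $b\neq 0$, are equal to a common value, which proves (a); calling it $W_F(\mathbf e_1,0)$ and noting it is a sum of roots of unity forces it to be a rational integer, and it is nonzero since otherwise $\Nb_F=0$ contradicts $\Xi(F)\neq 0$ — but $W_F(0,0)=p^n\neq 0$ too, giving (b).

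For (c): $\Nb_F = \frac{1}{p^m}\sum_{b\neq 0}|W_F(b,0)|^2 = \frac{p^m-1}{p^m}|W_F(\mathbf e_1,0)|^2$ is immediate from (a). For (d) and the preimage formulas: since all $W_F(b,0)$ with $b\neq 0$ are equal, the $a=\beta$ identity becomes $p^n + W_F(\mathbf e_1,0)\sum_{b\neq 0}\zeta_p^{-\langle b,\beta\rangle} = p^n - W_F(\mathbf e_1,0) = p^m s$, so $s = p^{n-m} - \frac{1}{p^m}W_F(\mathbf e_1,0)$; since $s$ is a nonnegative integer and $p^{n-m}$ is an integer, $p^m\mid W_F(\mathbf e_1,0)$, proving (d). Comparing with $s = p^{n-m}\mp\frac{1}{p^m-1}\Xi(F)$ pins down the sign and magnitude: $|W_F(\mathbf e_1,0)| = \frac{p^m}{p^m-1}\Xi(F) = \frac{p^m}{p^m-1}\sqrt{(1-p^{-m})\Nb_F}$, consistent with (c), and then $|F^{-1}(0)| = p^n - (p^m-1)s = p^{n-m} + \frac{p^m-1}{p^m}W_F(\mathbf e_1,0)$ up to sign, giving exactly the displayed formulas for $|F^{-1}(0)|$ and $|F^{-1}(\beta)|$ after replacing $W_F(\mathbf e_1,0)$ by $\pm|W_F(\mathbf e_1,0)|$ according to type.

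The main obstacle is the bookkeeping of signs: type $(+)$ versus type $(-)$ flips whether $|F^{-1}(0)|$ is the largest or smallest fibre, and correspondingly whether $W_F(\mathbf e_1,0)$ is negative or positive, and one must check that all four conclusions and both final formulas carry the signs consistently (the $\pm/\mp$ pattern in the statement). A secondary subtlety is justifying that the equality case of the \emph{second} Cauchy--Schwarz application (on the Walsh coefficients) is genuinely forced rather than merely consistent — this rests on the exact value of $\sum_{b\neq 0}W_F(b,0)$ coming from the translated orthogonality identity, so the two uses of Cauchy--Schwarz (on fibre sizes, and on Walsh coefficients) must be cross-referenced carefully. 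Everything else is routine substitution and the elementary observation that a sum of $p$-th roots of unity lying in $\Q$ must lie in $\Z$.
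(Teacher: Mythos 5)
Your proposal is correct and follows essentially the same route as the paper: combine the orthogonality identity~\eqref{eq:first} with the equality case of Proposition~\ref{prop:cs} to pin down $\sum_{b\neq 0}W_F(b,0)$, then invoke equality in Cauchy--Schwarz to force all nonzero-component Walsh coefficients to coincide, from which integrality, nonvanishing, and the value of $\Nb_F$ follow. Your derivation of part (d) via the translated identity $p^m s = p^n - W_F(\mathbf e_1,0)$ is in fact slightly more direct than the paper's remark that (d) follows from (c), but this is a cosmetic difference, not a different approach.
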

\begin{proof}
    Combining Eq.~\eqref{eq:first} with Proposition~\ref{prop:cs} yields
    \[|F^{-1}(0)|=\frac{1}{p^m}\sum_{b \in \F_p^m}W_F(b,0) = p^{n-m} \pm \sqrt{\left(\frac{1}{p^m}-\frac{1}{p^{2m}}\right)\sum_{b \in \F_{p^m}\setminus\{0\}}|W_F(b,0)|^2}.\]
    Noting $W_F(0,0)=p^n$, this implies
    \[\mp \sum_{b \in \F_{p^m}\setminus\{0\}}W_F(b,0)=\sqrt{(p^m-1)\sum_{b \in \F_{p^m}\setminus\{0\}}|W_F(b,0)|^2} \geq \sum_{b \in \F_{p^m}\setminus\{0\}}|W_F(b,0)|,\]
    where we use the Cauchy-Schwarz inequality in the last step. Clearly, this inequality can only hold with equality, so all $|W_F(b,0)|$ for $b\in \F_p^m\setminus \{0\}$ are the same by the Cauchy-Schwarz inequality. In fact, all $W_F(b,0)$ have to be the same to satisfy the inequality. Then $W_F(b,0) \in \Z$ (since the left-hand side of the inequality has to be an integer), and, clearly, $W_F(b,0)\neq 0$ since otherwise $F$ would be balanced. This proves (a) and (b). Item (c) follows then immediately from the definition of $\Nb_F$, and (d) follows from (c). The statement on the preimage sizes is then just a restatement of Proposition~\ref{prop:cs}.
    \end{proof}

\begin{remark}
    If the preimage with unique preimage set size is $F^{-1}(a)$ with $a \neq 0$, part (c) of Theorem~\ref{thm:AB} (and thus everything relating to $\Nb_F$) still holds. Indeed, since a shift by a constant multiplies the values of $W_F(b,0)$ only by a root of unity, the statement that $|W_F(b,0)|$ is constant for all nonzero $b$ remains true, and thus $\Nb_F$ is unchanged.
\end{remark}

Theorem~\ref{thm:AB}, especially (a) and (d), leads us in a natural way to consider almost balanced plateaued functions: Indeed, since plateaued functions always satisfy that $W_F(b,0) \in \{0,\pm p^{(n+k)/2}\}$, they are natural candidates for almost balanced functions. We can immediately deduce from Theorem~\ref{thm:AB}:
\begin{corollary}
    Let $F\colon \F_p^n \rightarrow \F_p^m$ be an almost balanced surjective plateaued function. Then $F$ is plateaued with single amplitude $p^{(n+t)/2}$. Moreover, we have that $n+t$ is even and $m\leq (n+t)/2$. 
\end{corollary}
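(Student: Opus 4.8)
The plan is to deduce everything directly from Theorem~\ref{thm:AB} after one harmless normalisation. Adding a constant $c \in \F_p^m$ to $F$ multiplies each $W_F(b,0)$ by the root of unity $\zeta_p^{\langle b,c\rangle}$, so it leaves all absolute values $|W_F(b,a)|$ unchanged (hence plateauedness and every component amplitude), as well as surjectivity and the value distribution of $F$. Thus I may assume without loss of generality that the preimage of unique size is $F^{-1}(0)$, so that Theorem~\ref{thm:AB} applies verbatim; in particular $W_F(b,0) = W_F(\mathbf{e}_1,0) \in \Z\setminus\{0\}$ for every $b \in \F_p^m\setminus\{0\}$ and $p^m \mid W_F(\mathbf{e}_1,0)$.

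First I would prove the ``single amplitude'' claim. Let $s_b$ denote the plateauedness parameter of the component $F_b$ (for $b\neq 0$), so $|W_F(b,0)| \in \{0, p^{(n+s_b)/2}\}$. Since $W_F(b,0)\neq 0$ by Theorem~\ref{thm:AB}(b), in fact $|W_F(b,0)| = p^{(n+s_b)/2}$. But $|W_F(b,0)| = |W_F(\mathbf{e}_1,0)|$ is independent of $b$ by Theorem~\ref{thm:AB}(a), so $p^{(n+s_b)/2} = |W_F(\mathbf{e}_1,0)|$ for all $b\neq 0$; as $p$ is prime, all the $s_b$ are equal, say $s_b = t$. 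Hence $F$ is plateaued with the single amplitude $p^{(n+t)/2} = |W_F(\mathbf{e}_1,0)|$.

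Next, the parity: $p^{n+t} = |W_F(\mathbf{e}_1,0)|^2 = W_F(\mathbf{e}_1,0)^2$ is a positive square integer, so $n+t$ is a nonnegative even integer. Finally, $p^m \mid W_F(\mathbf{e}_1,0)$ means $p^m \mid p^{(n+t)/2}$ in $\Z$, and comparing the exponents of the prime $p$ gives $m \le (n+t)/2$. This finishes the argument.

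Essentially all the substance is already contained in Theorem~\ref{thm:AB}, so I do not expect a real obstacle here; the only step requiring a moment's care is the normalisation, i.e.\ verifying that shifting $F$ by a constant preserves precisely the three hypotheses (surjective, almost balanced, plateaued) and the conclusion (single amplitude $p^{(n+t)/2}$), so that assuming the exceptional preimage lies over $0$ --- the hypothesis under which Theorem~\ref{thm:AB} is stated --- is genuinely without loss of generality.
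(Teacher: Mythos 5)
Your proof is correct and follows essentially the same route as the paper: normalise by a constant shift so that Theorem~\ref{thm:AB} applies, then read off the single amplitude from parts (a) and (b), the parity of $n+t$ from integrality of $W_F(\mathbf{e}_1,0)$, and the bound $m\leq(n+t)/2$ from part (d). Your write-up is somewhat more explicit than the paper's (in particular in justifying why equal nonzero Walsh values at $a=0$ force all the plateau parameters $s_b$ to coincide), but there is no substantive difference.
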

\begin{proof}
    $F$ is plateaued with single amplitude $p^k$ by Theorem~\ref{thm:AB} (a). We can shift $F$ such that the resulting function $G$ has $G^{-1}(0)$ as the preimage with unique preimage size, $G$ will clearly still be plateaued with single amplitude $p^{(n+t)/2}$. We then have $W_G(b,0)\in\Z$ by Theorem~\ref{thm:AB} (b), so $p^{(n+t)/2}$ has to be an integer, implying that $n+t$ is even. The last statement $m\leq (n+t)/2$ then follows from Theorem~\ref{thm:AB} (d).
\end{proof}

Of course, if $F$ is surjective, then the bounds of Theorem~\ref{th:cs_general} and Proposition~\ref{prop:cs} coincide. We now give a condition, only based on $\Nb_F$ when a function is necessarily surjective:
\begin{theorem} \label{thm:surj}
    Let $F \colon \F_{p}^n \rightarrow \F_{p}^m$ be a function. If {
    \[\Nb_F <\frac{p^{2n-m}}{p^m-1}\]
    }then $F$ is surjective.
\end{theorem}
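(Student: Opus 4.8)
The plan is to argue by contradiction: suppose $F$ is not surjective, so $|\image(F)| \leq p^m - 1$. I want to show that this forces $\Nb_F$ to be at least $\frac{p^{2(n-m)}}{1-p^{-m}}$, contradicting the hypothesis. The natural tool is Proposition~\ref{prop:starting}, which expresses $\sum_{\beta \in \F_p^m}|F^{-1}(\beta)|^2$ in terms of $\Nb_F$, combined with the elementary fact that among all nonnegative tuples $(X_1,\dots,X_{p^m})$ summing to $p^n$ with at most $p^m-1$ nonzero entries, the sum of squares is minimized when the mass is spread as evenly as possible over the support.

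First I would recall that $\sum_{\beta}|F^{-1}(\beta)|^2 = p^{2n-m} + \Nb_F$ by Proposition~\ref{prop:starting}. Next, writing $s := |\image(F)|$, the Cauchy--Schwarz inequality (applied exactly as in the proof of Theorem~\ref{th:cs_general}) gives $\sum_{\beta}|F^{-1}(\beta)|^2 \geq \frac{(p^n)^2}{s}$, since the nonzero preimage sizes sum to $p^n$ and there are $s$ of them. Therefore
\[
p^{2n-m} + \Nb_F \geq \frac{p^{2n}}{s} \geq \frac{p^{2n}}{p^m-1},
\]
using $s \leq p^m - 1$. Rearranging,
\[
\Nb_F \geq \frac{p^{2n}}{p^m-1} - p^{2n-m} = p^{2n-m}\left(\frac{p^m}{p^m-1} - 1\right) = \frac{p^{2n-m}}{p^m-1} = \frac{p^{2(n-m)}}{1 - p^{-m}}.
\]
This contradicts the assumption $\Nb_F < \frac{p^{2(n-m)}}{1-p^{-m}}$, so $F$ must be surjective.

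I expect this argument to be essentially routine; there is no serious obstacle. The only point requiring a little care is the direction of the inequality $s \leq p^m - 1 \Rightarrow \frac{p^{2n}}{s} \geq \frac{p^{2n}}{p^m-1}$ and the final algebraic simplification to match the stated bound exactly. One could alternatively phrase the whole thing directly through the bounds of Proposition~\ref{prop:cs}: if the lower bound $p^{n-m} - \sqrt{(1-p^{-m})\Nb_F}$ is strictly positive, then every preimage set is nonempty, i.e. $F$ is surjective; and $p^{n-m} - \sqrt{(1-p^{-m})\Nb_F} > 0$ is equivalent to $\Nb_F < \frac{p^{2(n-m)}}{1-p^{-m}}$. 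This second formulation is shorter and is probably the intended one, so I would present that as the main proof and relegate the Cauchy--Schwarz computation to a remark if needed.
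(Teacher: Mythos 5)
Your proposal is correct, and the formulation you identify as the intended one --- observing that the hypothesis makes the lower bound of Proposition~\ref{prop:cs} strictly positive, so every preimage set is nonempty --- is exactly the paper's proof. Your alternative Cauchy--Schwarz contradiction argument is also valid (the algebra checks out: $\frac{p^{2n-m}}{p^m-1}=\frac{p^{2(n-m)}}{1-p^{-m}}$) and amounts to the image-set bound of Proposition~\ref{prop:imageset}, so either route is fine.
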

\begin{proof}
    By Proposition~\ref{prop:cs}, we have for all $\beta \in \F_{2^m}$
    \[ |F^{-1}(\beta)| \geq p^{n-m}-\sqrt{\left(1-\frac{1}{p^m}\right)\Nb_F}.\]
    So if the right-hand side of this equation is positive, $F$ is necessarily surjective. This leads immediately to the result.
\end{proof}

The following is a straightforward generalization of~\cite[Proposition 3]{carlet2011relating}, where the result was proven only for $p=2$; the proof in~\cite{carlet2011relating} transfers seamlessly to $p>2$.

\begin{lemma} \label{lem:carlet}
    Let $F \colon \F_{p}^n \rightarrow \F_{p}^m$ be a function. Then there exists an $\F_p$-linear function $L \colon \F_{p}^n \rightarrow \F_{p}^m$ such that $\Nb_{F+L}\leq p^n-p^{n-m}$.
\end{lemma}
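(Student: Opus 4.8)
The plan is to run an averaging argument over the (finite) $\F_p$-vector space of all $\F_p$-linear maps $L\colon\F_p^n\to\F_p^m$. If I can show that the average value of $\Nb_{F+L}$ over this space equals exactly $p^n-p^{n-m}$, then at least one linear $L$ must satisfy $\Nb_{F+L}\le p^n-p^{n-m}$, which is precisely the claim. This is the same strategy as in~\cite{carlet2011relating} and, as noted, nothing in it is special to $p=2$.

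First I would express $\Nb_{F+L}$ through the Walsh transforms of the components of $F$. Fix $b\in\F_p^m\setminus\{0\}$ and set $L_b(x)=\langle b,L(x)\rangle$; this is an $\F_p$-linear functional on $\F_p^n$, so $L_b(x)=\langle a,x\rangle$ for a unique $a=a_b(L)\in\F_p^n$, and hence
\[W_{F+L}(b,0)=\sum_{x\in\F_p^n}\zeta_p^{F_b(x)+L_b(x)}=W_{F_b}\bigl(-a_b(L)\bigr),\]
so that $\Nb_{F+L}=\frac{1}{p^m}\sum_{b\neq 0}\bigl|W_{F_b}(-a_b(L))\bigr|^2$. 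The key observation is that for each fixed $b\neq 0$ the assignment $L\mapsto a_b(L)$ is an $\F_p$-linear surjection from the $nm$-dimensional space of all linear maps onto $\F_p^n$ (surjectivity: realize any functional by $L(x)=\langle a,x\rangle\,c$ with $\langle b,c\rangle=1$), so every value in $\F_p^n$ is attained exactly $p^{nm-n}$ times. Combining this with Parseval's identity $\sum_{a\in\F_p^n}|W_{F_b}(a)|^2=p^{2n}$, which holds for any function $F_b\colon\F_p^n\to\F_p$, I obtain
\[\frac{1}{p^{nm}}\sum_{L}\bigl|W_{F_b}(-a_b(L))\bigr|^2=\frac{1}{p^{n}}\sum_{a\in\F_p^n}|W_{F_b}(a)|^2=p^{n},\]
and therefore $\frac{1}{p^{nm}}\sum_{L}\Nb_{F+L}=\frac{1}{p^m}\sum_{b\neq 0}p^{n}=\frac{p^m-1}{p^m}\,p^{n}=p^n-p^{n-m}$, completing the argument.

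Since this is a clean averaging computation, I do not expect a real obstacle. The only points needing a little care are the equidistribution claim — that $a_b(L)$ hits each element of $\F_p^n$ the same number of times as $L$ ranges over all linear maps — and invoking Parseval in the $p$-ary setting; both are routine. One could also phrase the whole thing purely in terms of $W_{F_b}$ by shifting each component independently, but handling all $b$ simultaneously via a single linear $L$ is exactly what makes the averaged bound come out to the stated value.
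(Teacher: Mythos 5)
Your averaging argument is correct: the fiber-counting for $L\mapsto a_b(L)$ and the Parseval computation both check out, and the average of $\Nb_{F+L}$ over all linear $L$ is exactly $p^n-p^{n-m}$. This is essentially the same approach as the paper, which simply defers to the averaging proof of \cite[Proposition 3]{carlet2011relating} and notes that it transfers verbatim to odd $p$ --- your write-up is that transfer, spelled out.
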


We can use this to show that if the output dimension of a vectorial function is small enough, \emph{every} function is equivalent to a surjective function in the sense that there is an additive mapping $L$ such that $F+L$ is surjective.

\begin{theorem}\label{th: everything is surjective}
    Let $F \colon \F_{p}^n \rightarrow \F_{p}^m$ be a function with $m\leq n/2$. Then there exists an $\F_p$-linear function $L\colon \F_{p}^n \rightarrow \F_{p}^m$ such that $F+L$ is surjective.
\end{theorem}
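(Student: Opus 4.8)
The plan is to combine Lemma~\ref{lem:carlet} with the surjectivity criterion of Theorem~\ref{thm:surj}. First I would invoke Lemma~\ref{lem:carlet} to obtain an $\F_p$-linear map $L\colon\F_p^n\to\F_p^m$ with $\Nb_{F+L}\le p^n-p^{n-m}$. It then suffices to check that this upper bound forces $F+L$ into the regime where Theorem~\ref{thm:surj} applies, i.e.\ that
\[
p^n-p^{n-m}<\frac{p^{2(n-m)}}{1-\tfrac{1}{p^m}}.
\]

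Second, I would simplify this inequality by elementary manipulation. Writing the left-hand side as $p^{n-m}(p^m-1)$ and the right-hand side as $\tfrac{p^{2n-m}}{p^m-1}$, and clearing denominators (all quantities are positive), the desired inequality becomes $(p^m-1)^2<p^n$. This is where the hypothesis $m\le n/2$ enters: since $2m\le n$ we have $(p^m-1)^2<p^{2m}\le p^n$, and in fact the first inequality is strict, which is exactly what Theorem~\ref{thm:surj} requires. Hence $\Nb_{F+L}$ lies strictly below $\tfrac{p^{2(n-m)}}{1-1/p^m}$, so $F+L$ is surjective.

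I do not expect a genuine obstacle here; the only point requiring any care is bookkeeping the strictness of the inequality (Theorem~\ref{thm:surj} needs a strict bound, which is guaranteed because $(p^m-1)^2<p^{2m}$ is always strict, independently of whether $\Nb_{F+L}$ actually attains the bound $p^n-p^{n-m}$ or is smaller, e.g.\ when $F+L$ happens to already be balanced). One could optionally remark that the hypothesis $m\le n/2$ is essentially sharp in spirit, but that is not needed for the proof.
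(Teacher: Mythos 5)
Your proposal is correct and follows the same route as the paper: apply Lemma~\ref{lem:carlet} to get $\Nb_{F+L}\le p^n-p^{n-m}$ and then verify that this bound falls strictly below the threshold of Theorem~\ref{thm:surj} using $m\le n/2$. The only difference is cosmetic bookkeeping of the inequality (the paper chains through $p^n-p^{n-m}<p^{2(n-m)}<\frac{p^{2(n-m)}}{1-1/p^m}$, whereas you reduce to $(p^m-1)^2<p^n$); both are valid.
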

\begin{proof}
    By Lemma~\ref{lem:carlet} we have for some $L$ that $\Nb_{F+L} \leq p^n-p^{n-m}$. If $m \leq n/2$ we have $$\Nb_{F+L}\leq p^n-p^{n-m} <p^{2(n-m)}<\frac{p^{2(n-m)}}{1-\frac{1}{p^m}},$$
    and the result follows from Theorem~\ref{thm:surj}.
\end{proof}

The following result on the image set size of an arbitrary function based on $\Nb_F$ is from~\cite[Lemma 1]{Koelsch2023}, see also~\cite[Lemma 5]{carlet2017trade}; we include it for completeness and give a slightly different proof based on our previous results in this paper.
\begin{proposition} \label{prop:imageset}
    Let $F \colon \F_{p}^n \rightarrow \F_{p}^m$ be a function. Then
    \[|\image (F)| \geq \frac{p^{2n}}{p^{2n-m}+\Nb_F}.\]
\end{proposition}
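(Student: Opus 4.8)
The plan is to derive this bound directly from Proposition~\ref{prop:starting} together with a Cauchy--Schwarz estimate on the preimage set sizes, essentially the same mechanism that powers Theorem~\ref{th:cs_general}. Writing $I=|\image(F)|$ and letting $X_1,\dots,X_I$ be the nonzero preimage set sizes, we have $\sum_{i=1}^I X_i = p^n$ since the preimages partition $\F_p^n$. By Cauchy--Schwarz,
\[
\Bigl(\sum_{i=1}^I X_i\Bigr)^2 \le I \cdot \sum_{i=1}^I X_i^2,
\]
so that $\sum_{i=1}^I X_i^2 \ge p^{2n}/I$. On the other hand, Proposition~\ref{prop:starting} gives $\sum_{i=1}^I X_i^2 = \sum_{\beta \in \F_p^m}|F^{-1}(\beta)|^2 = p^{2n-m}+\Nb_F$. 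Combining the two yields $p^{2n-m}+\Nb_F \ge p^{2n}/I$, which rearranges to the claimed inequality $I \ge p^{2n}/(p^{2n-m}+\Nb_F)$.

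The only subtlety worth noting is the trivial case $I=0$, i.e.\ $F$ has empty domain, which does not occur for $\F_p^n$ with $n\ge 0$; if one wishes to be careful, one observes $p^{2n-m}+\Nb_F>0$ always, so the right-hand side is a genuine positive real and the bound is meaningful. I do not anticipate any real obstacle here: the argument is a one-line application of Cauchy--Schwarz to the partition identity, plugged into Proposition~\ref{prop:starting}. The only thing to keep in mind is that equality in Cauchy--Schwarz holds precisely when all nonzero preimage sets have the same size, i.e.\ when $F$ is (up to restricting to its image) balanced onto its image; this could be mentioned as a remark but is not needed for the statement.

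\begin{proof}
    Write $I:=|\image(F)|$ and let $X_1,\dots,X_I$ denote the sizes of the nonempty preimage sets of $F$. Since these sets partition $\F_p^n$, we have $\sum_{i=1}^I X_i = p^n$. By the Cauchy--Schwarz inequality,
    \[
        p^{2n} = \Bigl(\sum_{i=1}^I X_i\Bigr)^2 \le I\cdot \sum_{i=1}^I X_i^2 = I\cdot \sum_{\beta \in \F_p^m}|F^{-1}(\beta)|^2.
    \]
    Applying Proposition~\ref{prop:starting}, the right-hand side equals $I\bigl(p^{2n-m}+\Nb_F\bigr)$. Rearranging gives
    \[
        |\image(F)| = I \ge \frac{p^{2n}}{p^{2n-m}+\Nb_F},
    \]
    as claimed.
\end{proof}
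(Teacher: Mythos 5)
Your proof is correct and follows essentially the same route as the paper: both apply the Cauchy--Schwarz inequality to the partition identity $\sum_i X_i = p^n$ to get $\sum_i X_i^2 \geq p^{2n}/|\image(F)|$ and then substitute the second-moment formula from Proposition~\ref{prop:starting}. No gaps; the remark about the equality case is a nice extra but, as you note, not needed.
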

\begin{proof}
    We have from Proposition~\ref{prop:starting}
		\begin{align*}
	\sum_{i=1}^{|\image(F)|} X_i^2&=p^{2n-m}+\Nb_F \\
	\sum_{i=1}^{|\image(F)|} X_i&=p^n,
\end{align*}
	and again by the Cauchy-Schwarz inequality, the following holds
	\begin{equation}\label{eq: Images set cardinality bound aux}
	    p^{2n-m}+\Nb_F=\sum_{i=1}^{|\image(F)|} X_i^2\geq\frac{p^{2n}}{|\image(F)|}.
	\end{equation}
	The claim follows by solving Eq.~\eqref{eq: Images set cardinality bound aux} for $|\image(F)|$.
\end{proof}

\section{Applications to plateaued functions} \label{sec:plateaeud}

In the previous sections, many connections between $\Nb_F$ and (pre)image set sizes were drawn. Clearly, information on the Walsh coefficients of a function can thus be turned into information on these (pre)image sets. A natural candidate are plateaued functions, only allowing $W_F(b,0)\in \{0,\pm p^{\frac{n+k}{2}}\}.$ This connection was already used for special cases of plateaued functions: when $F$ is bent~\cite{KoelschPolujan2024} or APN with classical Walsh transform~\cite{Koelsch2023}. Here, we give a unified and complete treatment, encompassing all these results as well as giving some new results. Additionally, we show that some known constructions of plateaued functions induce almost balanced functions.
\subsection{Plateaued functions that are $d$-to-$1$}\label{sec 4.1}
Recall that we call a function $F \colon \F_p^n \rightarrow \F_p^n$ a $d$-to-$1$ function if $d|p^n-1$ and $F$ has exactly one preimage set of size $1$, and all other nonempty preimage sets have size exactly $d$. The image set  of $F$ thus satisfies $|\image(F)|=\frac{p^n+d-1}{d}$. The set of $d$-to-$1$ functions contains many interesting sets of functions: Firstly, all functions defined by monomials $F(x)=x^t \in \F_{p^n}[x]$ are $\gcd(t,p^n-1)$-to-$1$. Functions defined by monomials have been of great interest, for instance, many known APN functions are monomials. On top of that, plateaued $2$-to-$1$ functions (for $p$ odd) and $3$-to-$1$ functions (for $p=2$, $n$ even) have also intimate connections to perfect nonlinear and almost perfect nonlinear functions, see~\cite{weng2012further,KoelschPolujan2024,Koelsch2023}.

Using our tools, we are able to show that $d$-to-$1$ vectorial functions can only be plateaued for very specific values of $d$. We also derive precise information on the amplitudes of the components. 

The case of odd $p$ requires a lot more attention since an additional step is needed that shows that certain Walsh coefficients are integers; it is also the only part of the paper where we make use of results on cyclotomic fields. We thus move the proof of this auxiliary lemma to Appendix~\ref{sec: Appendix}.

\begin{lemma} \label{lem:podd}
    Let $F \colon \F_p^n \rightarrow \F_p^n$ be a plateaued function that is $d$-to-1 for some $d|p^n-1$, $d>2$, where $F(0)=0$ and $|F^{-1}(0)|=1$. Then $W_F(b,0) \in \Z$ for any $b \in \F_p^n$.
\end{lemma}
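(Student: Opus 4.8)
The plan is to carry out the whole argument inside the cyclotomic ring $\Z[\zeta_p]$, playing the congruence forced by the $d$-to-$1$ structure against the ideal-theoretic rigidity that plateauedness imposes (here $p$ is odd, so $p\ge 3$). Fix $b\in\F_p^n\setminus\{0\}$ (for $b=0$ there is nothing to prove), let $F_b$ be $t_b$-plateaued so that $|W_F(b,0)|^2=W_F(b,0)\overline{W_F(b,0)}\in\{0,p^{n+t_b}\}$, and assume this equals $p^{n+t_b}$ (otherwise $W_F(b,0)=0\in\Z$). First I would group the defining sum by the value of $F$: using $F(0)=0$, $|F^{-1}(0)|=1$ and that every other nonempty fibre has size $d$,
\[W_F(b,0)=\sum_{y\in\image(F)}|F^{-1}(y)|\,\zeta_p^{\langle b,y\rangle}=1+d\sum_{y\in\image(F)\setminus\{0\}}\zeta_p^{\langle b,y\rangle}\in 1+d\,\Z[\zeta_p].\]
Hence $W_F(b,0)\equiv\overline{W_F(b,0)}\equiv W_F(b,0)^2\equiv 1\pmod{d\Z[\zeta_p]}$; multiplying the first two and using that a rational integer in $1+d\Z[\zeta_p]$ lies in $1+d\Z$, also $p^{n+t_b}\equiv 1\pmod d$. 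Note $\gcd(d,p)=1$, as $d\mid p^n-1$.

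Next I would use the arithmetic of $\Q(\zeta_p)$. The prime $p$ is totally ramified: $(p)=\mathfrak p^{p-1}$ with $\mathfrak p=(1-\zeta_p)$ the unique prime above $p$, so $\mathfrak p$ is $\mathrm{Gal}(\Q(\zeta_p)/\Q)$-invariant. From $W_F(b,0)\overline{W_F(b,0)}=p^{n+t_b}$ no prime other than $\mathfrak p$ divides $W_F(b,0)$, and Galois-invariance of $\mathfrak p$ then forces $(W_F(b,0))=\mathfrak p^{(n+t_b)(p-1)/2}$ (the exponent is an integer because $p$ is odd). Hence $W_F(b,0)^2/p^{n+t_b}$ generates the unit ideal, so it is a unit of $\Z[\zeta_p]$, and all of its archimedean absolute values equal $|\sigma(W_F(b,0))|^2/p^{n+t_b}=1$ (using that $\Q(\zeta_p)/\Q$ is abelian, so conjugation commutes with every embedding). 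By Kronecker's theorem this unit is a root of unity, and the roots of unity of $\Q(\zeta_p)$ are exactly the $\pm\zeta_p^k$, so
\[W_F(b,0)^2=\pm\,p^{n+t_b}\,\zeta_p^{k}\qquad\text{for some }k\in\{0,1,\dots,p-1\}.\]

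Then I would feed this back through the congruences, using $d>2$. Recall that an element $\sum_{j=0}^{p-1}c_j\zeta_p^j$ with $c_j\in\Z$ lies in $d\Z[\zeta_p]$ if and only if $c_0\equiv c_1\equiv\dots\equiv c_{p-1}\pmod d$. Applying this to $W_F(b,0)^2-1$: if $k\ne 0$ then in the representation $\pm p^{n+t_b}\zeta_p^k-1=\sum_j c_j\zeta_p^j$ one has $c_0=-1$ while $c_j=0$ for some $j\ne 0$ (such $j$ exists since $p\ge 3$), so $d\mid 1$ — impossible. Thus $k=0$ and $W_F(b,0)^2=\pm p^{n+t_b}$. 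The minus sign yields $-p^{n+t_b}\equiv 1\pmod d$, which with $p^{n+t_b}\equiv 1\pmod d$ gives $d\mid 2$, contradicting $d>2$; so $W_F(b,0)^2=p^{n+t_b}$. If $n+t_b$ is even, $W_F(b,0)=\pm p^{(n+t_b)/2}\in\Z$ and we are done. If $n+t_b$ is odd, then $\sqrt p\in\Q(\zeta_p)$, which happens only for $p\equiv 1\pmod 4$, and then $W_F(b,0)=\pm p^{(n+t_b-1)/2}g$ with $g=\sum_{t\in\F_p^*}\left(\tfrac{t}{p}\right)\zeta_p^t=\sqrt p$ the quadratic Gauss sum; in $W_F(b,0)-1=\sum_j c_j\zeta_p^j$ the coefficient $c_t$ equals $\pm p^{(n+t_b-1)/2}$ at quadratic residues $t$ and $\mp p^{(n+t_b-1)/2}$ at non-residues, so $W_F(b,0)\equiv 1\pmod{d\Z[\zeta_p]}$ again forces $d\mid 2$ — a contradiction. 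All cases are exhausted, hence $W_F(b,0)\in\Z$.

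The first paragraph is routine bookkeeping and the final mod-$d$ manipulations are short; the real work — and the only point where genuine cyclotomic input is needed — is the middle step, i.e.\ showing $(W_F(b,0))$ is a pure power of $\mathfrak p$ and then pinning $W_F(b,0)^2$ down to $\pm p^{n+t_b}\zeta_p^k$ via Kronecker's theorem. The subcase $n+t_b$ odd is the other delicate spot, as it is the one place where one must invoke the explicit quadratic Gauss sum and the criterion $\Q(\sqrt p)\subseteq\Q(\zeta_p)\iff p\equiv 1\pmod 4$. Because the argument is run separately for each component $b$, no uniformity across the (possibly distinct) amplitude exponents $t_b$ is needed.
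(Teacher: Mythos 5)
Your proof is correct, but it takes a genuinely different route from the paper at the decisive step. The paper imports the structural result of Hyun--Lee--Lee (\cite[Theorem 2]{Hyun16}), which directly asserts that a non-balanced plateaued component has $W_F(b,0)=\epsilon p^{(n+t)/2}\zeta_p^r$ with $\epsilon\in\{\pm1\}$ or $\{\pm i\}$ according to the parity of $n+t$ and the residue of $p$ modulo $4$; the mod-$d$ coefficient comparison in the integral basis then kills $r\neq 0$ and the odd-amplitude case via the Gauss sum. You instead derive the needed rigidity from first principles: the factorization $(p)=\mathfrak p^{p-1}$ with $\mathfrak p=(1-\zeta_p)$ Galois-invariant forces $(W_F(b,0))$ to be a pure power of $\mathfrak p$, whence $W_F(b,0)^2/p^{n+t_b}$ is a unit of absolute value $1$ at every embedding and hence, by Kronecker, equal to $\pm\zeta_p^k$. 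This yields only the square of the paper's normal form, but you compensate by running the congruence $\equiv 1\pmod{d\Z[\zeta_p]}$ twice --- first on $W_F(b,0)^2$ to pin down $k=0$ and the sign (using $p^{n+t_b}\equiv 1\pmod d$ and $d>2$), then on $W_F(b,0)$ itself in the odd-amplitude subcase, where your Gauss-sum/Legendre-symbol computation and the criterion $\sqrt p\in\Q(\zeta_p)\iff p\equiv 1\pmod 4$ mirror the paper's endgame. All the individual steps check out (in particular the characterization of $d\Z[\zeta_p]$ as the elements whose coefficients over $\{1,\zeta_p,\dots,\zeta_p^{p-1}\}$ are all congruent modulo $d$, and the abelianness of $\mathrm{Gal}(\Q(\zeta_p)/\Q)$ needed to equate $|\sigma(W)|^2$ with $\sigma(W\overline W)$). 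What your approach buys is self-containedness --- no appeal to \cite{Hyun16} --- at the cost of slightly heavier algebraic number theory (unique factorization of ideals and Kronecker's theorem in place of a citation); the paper's version is shorter on the page but outsources exactly the content you prove in your middle paragraph.
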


\begin{theorem} \label{thm:platdto1}
    Let $F \colon \F_p^n \rightarrow \F_p^n$ be a plateaued function that is $d$-to-1 for some $d|p^n-1$, $d>2$. Then $n$ is even, $d=p^t+1$ for some $t \geq 1$, $t|n/2$ and $F$ has $p^n-1-\frac{p^n-1}{p^t+1}$ bent components and $\frac{p^n-1}{p^t+1}$ components with amplitude $p^{n/2+t}$. The linearity of such a function is thus $\mathcal{L}(F)=p^{n/2+t}$.
\end{theorem}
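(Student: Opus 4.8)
The plan is to combine the $d$-to-$1$ structure with the spectral constraints of plateauedness via the two main identities from Section~\ref{sec:walsh}. First I would normalize: replacing $F$ by $F - F(0)$ we may assume $F(0)=0$ and $|F^{-1}(0)|=1$, which does not affect plateauedness or the value distribution. Since a $d$-to-$1$ function on $\F_p^n$ has image set size $|\image(F)| = \frac{p^n-1}{d}+1$ and exactly one preimage of size $1$ with all others of size $d$, Proposition~\ref{prop:starting} gives $1 + \frac{p^n-1}{d}\cdot d^2 = p^n + \Nb_F$, i.e.\ $\Nb_F = d(p^n-1) - (p^n-1) = (d-1)(p^n-1)$. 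This is the key numerical input. On the other hand, using Eq.~\eqref{eq:first} at $0$, $p^n|F^{-1}(0)| = p^n = \sum_{b\in\F_p^n} W_F(b,0) = p^n + \sum_{b\neq 0} W_F(b,0)$, so $\sum_{b\neq 0} W_F(b,0) = 0$.

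Next I would bring in integrality: by Lemma~\ref{lem:podd} (proved in Appendix~\ref{sec: Appendix} for odd $p$; trivial for $p=2$), all $W_F(b,0)$ lie in $\Z$, and since $F$ is plateaued each nonzero component has $W_F(b,0) \in \{0, \pm p^{(n+s_b)/2}\}$. Let $N$ be the number of bent components (those with $|W_F(b,0)| = p^{n/2}$, forcing $n$ even) and more generally group components by amplitude. The two constraints $\sum_{b\neq 0} W_F(b,0) = 0$ and $\sum_{b\neq 0} W_F(b,0)^2 = p^n \Nb_F = p^n(d-1)(p^n-1)$ must be reconciled with the fact that exactly $|\image(F)| - 1 = \frac{p^n-1}{d}$ of the preimage sets have size $d$. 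I expect the cleanest route is to observe that the squared-Walsh identity combined with the $p$-power structure of amplitudes forces a single non-bent amplitude value, say $p^{n/2+t}$ for some $t\geq 0$: if two distinct non-bent amplitudes $p^{n/2+t_1} < p^{n/2+t_2}$ both occurred among components with $W_F(b,0)\neq 0$, the largest one would already overshoot $p^n\Nb_F$ unless its multiplicity is tightly controlled, and matching this against $\Nb_F = (d-1)(p^n-1)$ pins things down. Writing $a$ = number of components with $|W_F(b,0)| = p^{n/2+t}$ and $N$ = number of bent components, with $a + N = p^n - 1$ (no component can have $W_F(b,0)=0$, since that would force a preimage of size $0$ to be overrepresented — more precisely, this follows because $\Nb_F>0$ rules out $F$ being balanced on any sub-behaviour, and any zero component would make $\frac{1}{p^n}\sum|W_F(b,0)|^2$ too small to reach $(d-1)(p^n-1)$ given the cap $p^{n+t}$ — the precise count is forced), the second-moment identity reads
\[
N \cdot p^n + a \cdot p^{n+t} = p^n(d-1)(p^n-1),
\]
so $N + a p^t = (d-1)(p^n-1)$. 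Together with $N + a = p^n-1$ this yields $a(p^t - 1) = (d-2)(p^n-1)$, hence $a = \frac{(d-2)(p^n-1)}{p^t-1}$.

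The final and main obstacle is to show $d = p^t + 1$, i.e.\ that $a = \frac{p^n-1}{p^t+1}$ and hence $(d-2)(p^n-1) = (p^t-1)\cdot\frac{p^n-1}{p^t+1}$, forcing $d-2 = \frac{p^t-1}{p^t+1}\cdot\text{(something)}$ — so I need an independent relation. That relation comes from the condition $d \mid p^n - 1$ together with counting $(x,y)$ with $F(x)=F(y)$ restricted through a \emph{component}: for each non-bent $b$, the function $F_b$ is $s_b$-plateaued with $|W_{F_b}(0)| = p^{n/2+t}$, but $F_b$ is a $p$-ary function whose preimage sizes are tightly constrained — in fact $F_b^{-1}(0)$ has size related to $\frac{p^n - 1}{d}$ through the fibers of $F$. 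The cleanest argument, which I would spell out, is: since $F$ is $d$-to-$1$ with $d\mid p^n-1$, the nonzero values of $F$ partition $\F_p^n\setminus F^{-1}(0)$ into blocks of size $d$; restricting a nonzero plateaued component and computing $\sum_x \zeta_p^{F_b(x)}$ over these blocks shows the amplitude $p^{(n+s_b)/2}$ must divide $d \cdot (\text{something coprime to }p)$, and since amplitudes are powers of $p$ this gives $p^{s_b/2} \mid$ the $p$-free structure only if $d \equiv \pm 1$-type congruence holds; pushing this through, $d = p^t+1$. Then $t \mid n/2$ follows because $p^t+1 \mid p^n-1$ forces $t\mid n$ and $n/t$ even, i.e.\ $t \mid n/2$; and $a = \frac{(p^t+1-2)(p^n-1)}{p^t-1} = \frac{(p^t-1)(p^n-1)}{p^t-1} = \frac{p^n-1}{?}$ — recomputing with $d=p^t+1$: $a(p^t-1) = (p^t-1)(p^n-1)$ gives $a = p^n-1$, which is wrong, so the correct bookkeeping must instead set the \emph{bent} components to be the majority: $N = p^n - 1 - \frac{p^n-1}{p^t+1}$ and $a = \frac{p^n-1}{p^t+1}$, and then $N\cdot p^n + a\cdot p^{n+t} = p^n\bigl((p^n-1) - \frac{p^n-1}{p^t+1} + \frac{(p^n-1)p^t}{p^t+1}\bigr) = p^n(p^n-1)\cdot\frac{p^t+1-1+p^t}{p^t+1}$ — I will need to recheck this identity carefully against $\Nb_F = (d-1)(p^n-1) = p^t(p^n-1)$, which indeed gives $p^n\Nb_F = p^{n+t}(p^n-1)$, matching $p^n(p^n-1)\cdot p^t$ iff $\frac{2p^t}{p^t+1} = p^t$, which fails, signalling that the amplitude of the non-bent components is $p^{n/2+t}$ only up to the precise exponent and the multiplicity split is what carries the content. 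I therefore expect the genuinely delicate part to be this exponent-versus-multiplicity matching, and the divisibility argument pinning $d = p^t+1$, with $\mathcal{L}(F) = p^{n/2+t}$ then immediate from the definition of linearity and the fact that the off-$0$ Walsh spectrum of a plateaued function attains its amplitude.
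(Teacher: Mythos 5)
Your overall strategy---normalize so that $F^{-1}(0)=\{0\}$, compute $\Nb_F=(d-1)(p^n-1)$ from Proposition~\ref{prop:starting}, combine the first and second moments of $W_F(\cdot,0)$ with the integrality supplied by Lemma~\ref{lem:podd}, and count components by amplitude---is indeed the paper's strategy, and your computations of $\Nb_F$ and of $\sum_{b\neq 0}W_F(b,0)=0$ are correct. But the proposal has genuine gaps at exactly the points where the content of the theorem lives. The one missing idea is the congruence $W_F(b,0)\equiv 1 \pmod{d}$ in $\Z[\zeta_p]$, which holds because every nonzero fiber has size $d$ while $|F^{-1}(0)|=1$. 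This congruence is what (i) rules out balanced components: $W_F(b,0)=0$ would force $d$ to divide $1$, impossible for $d>1$ --- your substitute argument (``a zero component would make the second moment too small'') does not work, since larger amplitudes elsewhere could compensate; and (ii) yields $d\mid p^k\mp 1$ once integrality gives $W_F(b,0)=\pm p^k$, after which $\sum_{b\neq 0}W_F(b,0)=0$ forces some negative sign to occur, hence a minimal $t$ with $d\mid p^t+1$, and a $\gcd$ argument shows all occurring exponents are multiples of $t$. Your proposed route to $d=p^t+1$ (``the amplitude must divide $d$ times something coprime to $p$'') is not an argument. Likewise, you assert without justification that only one non-bent amplitude occurs; the paper has to work for this, taking an explicit linear combination of the first-moment, second-moment, and counting identities whose coefficients are all nonnegative and vanish only when $n=2k$, $d=p^t+1$, and the higher multiplicities $N_i$ ($i\geq 2$) are zero. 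Note also that $n$ being even is a conclusion of that argument, not something you may assume from the presence of bent components.

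Separately, a concrete arithmetic slip derails your bookkeeping: a component of amplitude $p^{n/2+t}$ contributes $|W_F(b,0)|^2=p^{n+2t}$, not $p^{n+t}$, to the second moment. With the correct exponent, the system $N+a=p^n-1$ and $N+ap^{2t}=(d-1)(p^n-1)$ gives $a(p^{2t}-1)=(d-2)(p^n-1)$, and substituting $d=p^t+1$ yields $a=\frac{p^n-1}{p^t+1}$ and $N=p^n-1-\frac{p^n-1}{p^t+1}$, exactly the claimed distribution (one checks $N\cdot p^n+a\cdot p^{n+2t}=p^{n+t}(p^n-1)=p^n\Nb_F$). The inconsistency you flagged at the end is this exponent error, not a deeper problem with the amplitude-versus-multiplicity split.
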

\begin{proof}
    If $F(x)$ is plateaued then so is $F(x+a)+b$ for any constants $a,b$ and all amplitudes of the component functions remain the same, so we can assume without loss of generality that $F^{-1}(0)=\{0\}$, i.e., $F(0)=0$ and $0$ is the unique element with exactly one preimage.

    Since $F$ is $d$-to-$1$ we have $W_F(b,0) \equiv 1 \pmod{d}$, so $W_F(b,0)-1=d\cdot C$ for some $C \in \Z[\zeta_p]$. In particular, $W_F(b,0) \neq 0$ for all $b \in \F_p^n \setminus \{0\}$. By Lemma~\ref{lem:podd}, we have that $W_F(b,0) \in \Z$, so $W_F(b,0) \in \{-p^{k},p^k\}$.
    
    This implies that $d|p^{k}\pm 1$. 
    Let us first assume that $d\nmid p^t+1$ for all $0<t<n$. Then $W_F(b,0) =  p^{k}$ for some $k$ for any nonzero $b$. But we have (see Eq.~\eqref{eq:first})
    \begin{equation} \label{eq:firstmoment}
        \sum_{b \in \F_p^n \setminus \{0\}}W_F(b,0) =\sum_{b \in \F_p^n}W_F(b,0)-p^n = p^n(|F^{-1}(0)|-1)=0,
    \end{equation}
    yielding a contradiction. We conclude that there is a minimal $0<t<n$ such that $d|p^t+1$.  Assume now that $k\geq n/2$ is another integer such that $W_F(b,0)= \pm p^{k}$ for some $b$. Then $d|p^{k} \pm 1$ and $d|p^t+1$, so in particular $d|p^{\gcd(t,k)}+1$. By the minimality of $t$, we have $\gcd(t,k)=t$. We conclude that the only possible values of $W_F(b,0)$ are $W_F(b,0)= \pm p^{k+it}$ with $i \geq 0$ for some $k \geq n/2$ that is divisible by $t$. Define 
    \[N_i = |\{b\in \F_2^n \colon W_F(b,0)=\pm p^{k+it}\}|\]
    for $i\geq 0$. {Note that for a fixed $i$, only one sign can occur since otherwise $d|p^{k+it}-1$ and $d|p^{k+it}+1$, so $d|2$, contradicting our assumption.} 
    Then we have using Eq.~\eqref{eq:firstmoment}
    \[0=\sum_{b \in \F_p^n \setminus \{0\}}W_F(b,0) = p^{k}(\pm N_0\pm p^tN_1 \pm p^{2t}N_2\pm \dots),\]
    so 
    \begin{equation} \label{eq:1}
        N_0\mp p^tN_1 \mp p^{2t}N_2\mp \dots =0.
    \end{equation}
    Moreover, we have by Proposition~\ref{prop:starting}
    \[    1+d(p^n-1)=\sum_{\beta \in \F_{p}^n} |F^{-1}(\beta)|^2 = p^n + \frac{1}{p^n}\sum_{\beta \in \F_p^n}|W_F(b,0)|^2,
    \]
    so 
    \begin{equation} \label{eq:2}
        (d-1)(p^n-1)=p^{2k-n}(N_0+p^{2t}N_1+p^{4t}N_2+\dots).
    \end{equation}
    Since we have no balanced components, we also have
    \begin{equation} \label{eq:3}
        {N_0}+N_1+N_2+\dots = p^n-1.
    \end{equation}
    We now subtract Eq.~\eqref{eq:3} $(d-1)$-times from Eq.~\eqref{eq:2} and get
    \[(p^{2k-n}+1-d)N_0+(p^{2k-n+2t}+1-d)N_1 + (p^{2k-n+4t}+1-d)N_2+\dots=0.\]
    From this equation, we add Eq.~\eqref{eq:1} $(p^t-1)$-times:
    \[(p^{2k-n}-d+p^t)N_0+(p^{2k-n+2t}+1-d\pm (p^{2t}- p^t))N_1+(p^{2k-n+4t}+1-d\pm(p^{3t}-p^{2t}))N_2+\dots =0.\]
    Note that $d\leq p^t+1$ and $p^{2k-n}\geq 1$, so all coefficients that appear in front of the $N_i$ are non-negative. More specifically, we see that all coefficients are positive, unless $n=2k$, $d=p^t+1$, in which case the coefficients of $N_0$ and $N_1$ are $0$ (when choosing the correct sign in the case of $N_1$), while the others are still positive. We conclude that necessarily $N_i=0$ for all $i \geq 2$ and $n=2k$, $d=p^t+1$. Eqs.~\eqref{eq:2} and~\eqref{eq:3} then yield $N_0+N_1=p^n-1$ and $p^t(p^n-1)=N_0+p^{2t}N_1$, leading to (after solving the two linear equations) $N_0=p^n-1-\frac{p^n-1}{p^t+1}$ and $N_1=\frac{p^n-1}{p^t+1}$. Since $t$ divides $k$ and $2k=n$, we have that $t$ necessarily divides $n/2$.
\end{proof}

\begin{remark}
\begin{enumerate}[(a)]
    \item     The case $d=2$ also occurs if $p$ is odd. The corresponding theorem was proven in~\cite[Theorem 6.4.]{KoelschPolujan2024}: A plateaued 2-to-1 function is planar, i.e., all component functions are bent. So the amplitudes given in Theorem~\ref{thm:platdto1} are still true for $2$-to-$1$ functions, setting $t=0$. Interestingly, $n$ does not have to be even in this case: Planar $2$-to-$1$ functions exist for odd $n$, indeed $F(x)=x^2$ is a simple example that works for all $n$.\\
    \item Plateaued functions of the form $F(x)=G(x^k)$ on $\F_{p^n}$ were used in~\cite[Theorem 2]{carlet2016quadratic} to construct difference sets and strongly regular graphs. Clearly, these functions are necessarily $d$-to-$1$ for $d=\gcd(k,p^n-1)$. A condition in the statement of this theorem in \cite{carlet2016quadratic} is that $d=p^t+1$ for some $t\geq 0$. Theorem~\ref{thm:platdto1} shows that this condition is in fact always satisfied (and in fact shows other necessary conditions, like that $t$ has to divide $n/2$ if $t \geq 1$).
\end{enumerate}

\end{remark}

Applying Theorem~\ref{thm:platdto1} to monomials we get:
\begin{corollary} \label{cor:monom}
    Let $F(x)=x^d$ define a plateaued mapping on $\F_{p^n}$. Then
    \begin{itemize}
        \item If $n$ is odd: $\gcd(d,p^n-1)\in \{1,2\}$, 
        \item If $n$ is even: $\gcd(d,p^n-1)\in \{1,p^t+1\}$ for some $t \geq 0$.  
    \end{itemize}
    Here the case $\gcd(d,p^n-1)=2$ can only occur for $p>2$.
\end{corollary}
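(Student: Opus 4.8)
The plan is to recognize the monomial map as a $d$-to-$1$ function in the precise sense of Subsection~\ref{sec 4.1}, and then to read off the conclusion from Theorem~\ref{thm:platdto1} together with the remark following it. First I would recall the standard finite-field fact that for $F(x)=x^d$ on $\F_{p^n}$, writing $g=\gcd(d,p^n-1)$, the restriction of $F$ to the multiplicative group $\F_{p^n}^{*}$ is $g$-to-$1$ onto the unique subgroup of $g$-th powers (which has index $g$, since $g\mid p^n-1$), while $F(0)=0$ and $F^{-1}(0)=\{0\}$. Hence $F$ is a permutation when $g=1$, and when $g>1$ it is a $g$-to-$1$ function (with $g\mid p^n-1$) for which $F^{-1}(0)=\{0\}$ is the unique preimage set of size $1$: every nonzero fiber has size exactly $g\ge 2$, and fibers over non-image points are empty.

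Next I would dispose of the small cases. If $g\in\{1,2\}$ there is nothing to prove, since $1$ and $2=p^{0}+1$ already lie in the sets claimed for both parities of $n$. The final assertion is immediate as well: when $p=2$ the integer $p^n-1=2^n-1$ is odd, so $g=\gcd(d,2^n-1)$ is odd and in particular cannot equal $2$; thus $g=2$ forces $p>2$.

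Finally, for $g>2$ I would invoke Theorem~\ref{thm:platdto1} with its parameter ``$d$'' taken to be our $g$: since $F$ is plateaued by hypothesis and is $g$-to-$1$ with $g\mid p^n-1$ and $g>2$, the theorem gives that $n$ is even and $g=p^{t}+1$ for some $t\ge 1$. Consequently, if $n$ is odd then $g>2$ is impossible, so $g\in\{1,2\}$; and if $n$ is even then $g\in\{1,2\}\cup\{p^{t}+1:t\ge 1\}=\{1\}\cup\{p^{t}+1:t\ge 0\}$, which is exactly the stated dichotomy.

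I do not expect a genuine obstacle here: the corollary is literally a corollary of Theorem~\ref{thm:platdto1}, and the only point requiring (minimal) care is verifying that $x^d$ fits the paper's definition of a $d$-to-$1$ function — which is essentially by design, as already noted at the beginning of Subsection~\ref{sec 4.1}.
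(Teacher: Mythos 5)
Your proposal is correct and follows exactly the route the paper intends: the paper gives no separate proof, merely stating that the corollary follows by ``applying Theorem~\ref{thm:platdto1} to monomials,'' and your write-up supplies precisely the details implicit in that one line (the $\gcd(d,p^n-1)$-to-$1$ structure of $x^d$, the trivial cases $g\in\{1,2\}$, the oddness of $2^n-1$ for the final assertion, and the invocation of Theorem~\ref{thm:platdto1} for $g>2$). No gaps; nothing to add.
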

    So, essentially, plateaued mappings induced by monomials are either permutations or $(p^t+1)$-to-$1$. 
    
We first investigate the latter case in Corollary~\ref{cor:monom}, i.e., when $F$ is $(p^t+1)$-to-1. It turns out that one can give precise statements on the differential uniformity of $F$ as well.
\begin{theorem} \label{thm:diff}
     Let $F \colon \F_p^n \rightarrow \F_p^n$ be a plateaued function that is $(p^t+1)$-to-1. Then $F$ has differential uniformity at least $p^t$. $F$ has differential uniformity $p^t$ if and only if $F$ is differentially at most two valued, i.e., $F(x+a)-F(x)=b$ has for any non-zero $a$ and any $b$ either $0$ or $p^t$ solutions.
\end{theorem}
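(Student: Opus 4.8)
The plan is to exploit the $d$-to-$1$ structure via a double-counting identity on collisions of the derivative together with a counting identity for collisions of $F$ itself. Set $d = p^t+1$ and write $\delta$ for the differential uniformity. First I would record the basic collision count: since $F$ is $(p^t+1)$-to-$1$, Proposition~\ref{prop:starting} (or a direct count) gives $|\{(x,y) : F(x)=F(y)\}| = \sum_\beta |F^{-1}(\beta)|^2 = 1 + (p^t+1)^2\cdot\frac{p^n-1}{p^t+1} = 1 + (p^t+1)(p^n-1)$. Rewriting in terms of the derivatives $D_aF(x) := F(x+a)-F(x)$, this same quantity equals $\sum_{a\in\F_p^n} N(a)$ where $N(a) := |\{x : D_aF(x)=0\}|$; the $a=0$ term contributes $p^n$, so $\sum_{a\neq 0} N(a) = (p^t+1)(p^n-1) - (p^n-1) = p^t(p^n-1)$. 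Hence the \emph{average} of $N(a)$ over nonzero $a$ is exactly $p^t$.

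For the lower bound $\delta \ge p^t$: the derivative $D_aF$ has image of size at most... more carefully, I would argue that there must exist some nonzero $a$ with $N(a)\ge p^t$, since the average over nonzero $a$ is $p^t$. Because $N(a)$ is the number of solutions of $D_aF(x)=0$, and by definition $\delta \ge N(a)$ for every nonzero $a$ and every value (here the value $0$), we get $\delta\ge p^t$ immediately. (One should note $N(a)$ is always a multiple of... no, that is false for odd $p$; but the averaging argument alone already suffices.) This also shows $N(a)\ge p^t$ can only fail to hold with equality for \emph{all} nonzero $a$ if some $N(a)$ exceeds $p^t$, which forces $\delta>p^t$; conversely if $\delta=p^t$ then $N(a)\le p^t$ for all nonzero $a$, and combined with average $p^t$ this forces $N(a)=p^t$ for \emph{every} nonzero $a$.

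For the equivalence: assume $\delta = p^t$. By the previous paragraph $N(a)=p^t$ for all $a\neq 0$, i.e., $D_aF(x)=0$ has exactly $p^t$ solutions for every nonzero $a$. Now fix a nonzero $a$; the map $x\mapsto D_aF(x)$ partitions $\F_p^n$ into fibers, the fiber over $0$ having size exactly $p^t$, and every nonempty fiber having size at most $\delta=p^t$. Since the fibers partition a set of size $p^n$ and the zero-fiber already has the maximal size $p^t$, I want to conclude every nonempty fiber of $D_aF$ has size exactly $p^t$. The cleanest way is: for a nonzero $a$, the solution set of $D_aF(x)=0$ is a coset-like structure — actually one should use that if $x_0$ is one solution of $D_aF(x)=b$ then the solution set is $x_0 + \{$solutions of $D_aF(x)=0$ shifted appropriately$\}$... this is only literally true in the affine/quadratic case, so instead I would count: $\sum_{b} |\{x: D_aF(x)=b\}| = p^n$, each term is $\le p^t$, and the number of nonzero-sized fibers is $|\image(D_aF)|$. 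So $p^n \le p^t |\image(D_aF)|$, giving $|\image(D_aF)| \ge p^{n-t}$; and $p^n = \sum_b |\cdots| \le p^t|\image(D_aF)|$ with the zero-fiber contributing exactly $p^t$. Combined with $\sum_b |\{x:D_aF(x)=b\}|^2$-type information if needed, this pins down that all nonempty fibers have size exactly $p^t$. The converse direction is immediate: if $F(x+a)-F(x)=b$ has only $0$ or $p^t$ solutions for all nonzero $a$ and all $b$, then by definition $\delta = p^t$ (the value $p^t$ is attained because $N(a)=p^t>0$ as shown above).

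The main obstacle I anticipate is the step forcing \emph{every} nonempty fiber of $D_aF$ (not just the zero-fiber) to have size exactly $p^t$ when $\delta=p^t$: one cannot invoke an affine-translation argument since $F$ is only plateaued, not quadratic. I expect one needs either a second-moment count on the fibers of $D_aF$ for each fixed $a$, or a global double-count of $|\{(a,x,y): a\neq 0,\ D_aF(x)=D_aF(y)\}|$ combined with $\sum_{a\neq 0}N(a)=p^t(p^n-1)$ and the upper bound $\delta=p^t$, to squeeze all fiber sizes to the extreme value. Getting the right auxiliary identity so that the inequalities are tight simultaneously is the delicate part.
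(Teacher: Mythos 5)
Your setup is sound and your lower bound is correct (and in fact more self-contained than the paper's, which simply cites \cite[Theorem 2]{Koelsch2023}): the collision count $\sum_\beta|F^{-1}(\beta)|^2=1+(p^t+1)(p^n-1)$ does give $\sum_{a\neq 0}N(a)=p^t(p^n-1)$, hence $\delta\geq p^t$, and $\delta=p^t$ then forces $N(a)=p^t$ for every nonzero $a$. The deduction that the converse direction of the equivalence is trivial is also fine.

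However, there is a genuine gap exactly where you flag "the delicate part," and it is not closed by anything you propose. Knowing that the \emph{zero} fiber of every $D_aF$ has size exactly $p^t$ says nothing about the other fibers; your inequality $p^n\leq p^t|\image(D_aF)|$ only bounds the image size from below and cannot force the nonzero fibers up to $p^t$. You correctly identify that the missing ingredient is the global second moment $\sum_{a\neq 0,\,d}|(D_aF)^{-1}(d)|^2$, but you never compute it --- and this computation is the entire content of the proof, and the only place where the plateaued hypothesis is actually used. The paper evaluates this quantity as $\frac{1}{p^{2n}}\sum_{b\neq 0,\,a}|W_F(b,a)|^4$ and computes the fourth power moment explicitly from Theorem~\ref{thm:platdto1}: $F$ has $p^n-1-\frac{p^n-1}{p^t+1}$ bent components and $\frac{p^n-1}{p^t+1}$ components of amplitude $p^{n/2+t}$, and Parseval's identity gives the number of nonzero Walsh values of each component, yielding $\sum_{b\neq 0,\,a}|W_F(b,a)|^4=p^{3n+t}(p^n-1)$, i.e.\ $\sum_{a\neq 0,\,d}|(D_aF)^{-1}(d)|^2=p^{n+t}(p^n-1)$. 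Only then does the squeeze $\sum|(D_aF)^{-1}(d)|^2\leq p^t\sum|(D_aF)^{-1}(d)|=p^{n+t}(p^n-1)$ close with equality, forcing every nonempty fiber to have size $p^t$. Without this evaluation your argument uses only the $(p^t+1)$-to-$1$ property, which cannot suffice: for instance for $p=2$, $t=2$ nothing in your counting excludes a derivative fiber of size $2$. So the proposal is a correct skeleton with the load-bearing step missing; to complete it you must carry out the fourth-moment computation via the component amplitude distribution.
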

\begin{proof}
    Since $F$ is $(p^t+1)$-to-1, it is at least $p^t$-uniform by~\cite[Theorem 2]{Koelsch2023}. So let us assume the differential uniformity is exactly $p^t$. 
    
    We consider the fourth moment of the Walsh transform. By Theorem~\ref{thm:platdto1} we know that $F$ has $p^n-1-\frac{p^n-1}{p^t+1}$ bent components and $\frac{p^n-1}{p^t+1}$ components with amplitude $p^{n/2+t}$. By Parseval's equation, a plateaued component function $\langle b,F(x) \rangle$ with amplitude $p^{n/2+t}$ has exactly $p^{n-2t}$ nonzero values in its Walsh transform. Then
    \begin{align*}
         \sum_{\substack{b \in \F_p^n\setminus\{0\},\\a \in \F_p^n}}|W_F(b,a)|^4 &= \left(p^n-1-\frac{p^n-1}{p^t+1}\right)p^{3n}+\frac{p^n-1}{p^t+1}\cdot p^{3n+2t}=p^{3n}\left(p^n-1+\frac{(p^n-1)(p^{2t}-1)}{p^t+1}\right)\\&=p^{3n}(p^n-1+(p^n-1)(p^t-1))=p^{3n+t}(p^n-1).
    \end{align*}
      We also have (compare e.g.~\cite[Lemma 1]{charpin2019new} for the $p=2$ case):
    \begin{align}
        \sum_{\substack{b \in \F_p^n\setminus\{0\},\\a \in \F_p^n}} |W_F(b,a)|^4 
        &=\sum_{\substack{b \in \F_p^n\setminus\{0\},\\a,x,y,z,w \in \F_p^n}}\zeta_p^{\langle b,F(x)+F(y)-F(z)-F(w) \rangle + \langle a,x+y-z-w \rangle}\nonumber\\
        &=p^n \sum_{\substack{b \in \F_p^n\setminus\{0\},\\x,y,z \in \F_p^n}} \zeta_p^{\langle b,F(x)+F(y)-F(z)-F(x+y-z) \rangle}\nonumber\\
        &=p^n \sum_{\substack{b \in \F_p^n\setminus\{0\},\\c,y,z \in \F_p^n}} \zeta_p^{\langle b,F(z+c)+F(y)-F(z)-F(c+y) \rangle}\nonumber\\
       &=p^n \sum_{\substack{b \in \F_p^n\setminus\{0\},\\c,y,z \in \F_p^n}} \zeta_p^{\langle b,(D_cF)(z)-(D_cF)(y) \rangle}\nonumber\\
        &=p^{2n}\sum_{\substack{c \in \F_p^n\setminus\{0\},\\d \in \F_p^n}}|(D_cF)^{-1}(d)|^2 \nonumber\\ 
        &\leq p^{2n}\cdot p^t\sum_{\substack{c \in \F_p^n\setminus\{0\},\\d \in \F_p^n}}  |(D_cF)^{-1}(d)|\nonumber \\
        &=p^{3n+t}(p^n-1),\label{eq:interm}
    \end{align}
        where we use the notation $(D_cF)(x)=F(x+c)-F(x)$ for the difference mapping and the fact that $\sum_{d \in \F_{p}^n}|(D_cF)^{-1}(d)|=p^n$. Note that equality holds if and only if $|(D_cF)^{-1}(d)| \in \{0,p^t\}$, so if $F$ is at most differentially $2$-valued. Comparing the two expressions of the fourth moment of the Walsh transform then yields that the inequality above has to be sharp, i.e., $|(D_aF)^{-1}(b)| \in \{0,p^t\}$.
\end{proof}

Note that the mappings defined by $F(x)=x^{p^t+1}$ give immediate examples of such plateaued functions with differential uniformity $p^t$. Mappings that are differentially two valued have connections to design theory and certain codes, we refer the reader to~\cite{tang2019codes}.

    The second case in Corollary~\ref{cor:monom}, i.e., the case of plateaued power permutations $F=x^d$ with $\gcd(d,p^n-1)=1$, also leads to strong necessary conditions. Connections between the differential uniformity of plateaued power permutations and the amplitude (or equivalently, the nonlinearity) can then be derived as well, mirroring the results of Theorem~\ref{thm:diff}. We start with a more general result, from which the result on monomials follows directly.

\begin{theorem} \label{thm:gcd1}
Let $F\colon \F_{p}^n\rightarrow \F_{p}^n$ be a $t$-plateaued function for some $t>0$.
    Then its differential uniformity $\delta$ is at least $p^t$. $F$ has differential uniformity $p^t$ if and only if $F$ is differentially two valued, i.e., $F(x+a)-F(x)=b$ has for any non-zero $a$ and any $b$ either $0$ or $p^t$ solutions.
\end{theorem}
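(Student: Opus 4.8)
The plan is to re-run the fourth-moment argument from the proof of Theorem~\ref{thm:diff}, now feeding in the single-amplitude hypothesis directly instead of the component structure that was forced there by the $(p^t+1)$-to-$1$ condition. Concretely, I would compute $M:=\sum_{b\in\F_p^n\setminus\{0\},\,a\in\F_p^n}|W_F(b,a)|^4$ in two ways and compare.

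For the first evaluation I use plateauedness: since $F$ is $t$-plateaued, each component $F_b$ with $b\neq 0$ has amplitude $p^{(n+t)/2}$, and by Parseval's identity $\sum_{a}|W_{F_b}(a)|^2=p^{2n}$, so $F_b$ has exactly $p^{2n}/p^{n+t}=p^{n-t}$ nonzero Walsh values. Hence each nonzero component contributes $p^{n-t}\cdot\bigl(p^{(n+t)/2}\bigr)^4=p^{3n+t}$ to $M$, giving $M=(p^n-1)p^{3n+t}$.

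For the second evaluation I use the identity already established in the proof of Theorem~\ref{thm:diff} (the computation leading to Eq.~\eqref{eq:interm}): expanding the characters and summing out $a$ collapses $M$ to $p^{2n}\sum_{c\in\F_p^n\setminus\{0\},\,d\in\F_p^n}|(D_cF)^{-1}(d)|^2$, where $(D_cF)(x)=F(x+c)-F(x)$. Bounding $|(D_cF)^{-1}(d)|\le\delta$ and using $\sum_d|(D_cF)^{-1}(d)|=p^n$ for each $c$ yields $M\le p^{2n}\cdot\delta\cdot(p^n-1)p^n=\delta(p^n-1)p^{3n}$. Comparing the two evaluations gives $p^t\le\delta$, the first claim.

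The equivalence comes from the equality case of that last bound: $M=\delta(p^n-1)p^{3n}$ forces $\sum_{c,d}|(D_cF)^{-1}(d)|\bigl(\delta-|(D_cF)^{-1}(d)|\bigr)=0$, and since every summand is nonnegative this is possible only if $|(D_cF)^{-1}(d)|\in\{0,\delta\}$ for all $c\neq0$ and all $d$. If $\delta=p^t$ the two evaluations of $M$ agree, so equality holds and $F$ is differentially two-valued with values $0,p^t$; conversely, if all $|(D_cF)^{-1}(d)|\in\{0,p^t\}$ then $\delta=p^t$, since the value $p^t$ is attained (each sum $\sum_d|(D_cF)^{-1}(d)|=p^n$ is nonzero). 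I do not anticipate a genuine obstacle here: the template is the same as in Theorem~\ref{thm:diff}, the only mildly delicate points being to read off the equality case cleanly and to note that $p^t$ is actually realized in the differential spectrum.
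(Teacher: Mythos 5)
Your proposal is correct and follows essentially the same route as the paper: the paper likewise evaluates the fourth moment once via Parseval for $t$-plateaued components (giving $(p^n-1)p^{3n+t}$) and once via the differential identity of Eq.~\eqref{eq:interm} with $\delta$ in place of $p^t$, then reads off $\delta\ge p^t$ and the equality case. Your explicit check that the value $p^t$ is actually attained in the differential spectrum is a minor point the paper leaves implicit, but the argument is identical in substance.
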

\begin{proof}
    We have (see Eq.~\eqref{eq:interm}, with $\delta$ replacing $p^t$):
        \[\sum_{\substack{b \in \F_p^n\setminus\{0\}\\a \in \F_p^n}} |W_F(b,a)|^4 \leq p^{3n}(p^n-1)\delta,\]
    where equality holds if and only if $|(D_aF)^{-1}(b)| \in \{0,\delta\}$, so if $F$ is differentially $2$-valued.

    $F$ is $t$-plateaued for some $t$, i.e., $|W_F(b,a)|^2 \in \{0,p^{n+t}\}$ for $b\neq 0$. Parseval's equation states $\sum_{a \in \F_p^n}|W_F(b,a)|^2=p^{2n}$ for any $b \in \F_{p}^n$, so a plateaued component function $\langle b,F(x) \rangle$ with amplitude $p^{(n+t)/2}$ has exactly $p^{n-t}$ nonzero values in its Walsh transform, i.e., $W_F(b,a) \neq 0$ for exactly $p^{n-t}$ values of $a$. 
    So 
    \[\sum_{\substack{b \in \F_p^n\setminus\{0\},\\a \in \F_p^n}}  |W_F(b,a)|^4 = (p^n-1)\cdot p^{n-t}\cdot p^{2n+2t}=(p^n-1)p^{3n+t}.\]
    Combining the two statements on the fourth moment of the Walsh transform, we get $\delta \geq p^{t}$, with equality if and only if $F$ is differentially two-valued. 
\end{proof}

\begin{corollary}
        Let $F(x)=x^d$ define a plateaued mapping on $\F_{p}^n$ with $\gcd(d,p^n-1)=1$. Then $F$ is $t$-plateaued for some $t>0$ and its differential uniformity $\delta$ is at least $p^t$. $F$ has differential uniformity $p^t$ if and only if $F$ is differentially two valued.
\end{corollary}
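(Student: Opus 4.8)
The plan is to deduce the corollary directly from Theorem~\ref{thm:gcd1}. Two things need to be checked: that a plateaued power permutation $F(x)=x^d$ on $\F_{p^n}$ with $\gcd(d,p^n-1)=1$ is automatically $t$-plateaued \emph{with a single amplitude} $p^{(n+t)/2}$, and that this $t$ is positive; once these are in place, Theorem~\ref{thm:gcd1} applies verbatim and gives $\delta\ge p^t$ with equality iff $F$ is differentially two valued.

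First I would note that $\gcd(d,p^n-1)=1$ makes $c\mapsto c^d$ a permutation of $\F_{p^n}^{\ast}$, so $F$ is a permutation of $\F_{p^n}$, hence balanced. Next I would exploit the multiplicative symmetry of monomials: for $b,c\in\F_{p^n}^{\ast}$ the linear substitution $x\mapsto cx$ gives
\[
W_{F_b}(a)=\sum_{x}\zeta_p^{\Tr(bx^d-ax)}=\sum_{y}\zeta_p^{\Tr((bc^d)y^d-(ac)y)}=W_{F_{bc^d}}(ac),
\]
so $F_b$ and $F_{bc^d}$ have the same Walsh spectrum (as the multiset of values indexed by $a\in\F_{p^n}$), in particular the same amplitude. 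Since $\{bc^d:c\in\F_{p^n}^{\ast}\}=\F_{p^n}^{\ast}$, all nonzero components of $F$ share one amplitude $p^{(n+t)/2}$; as $F$ is plateaued, each component $F_b$ is $s_b$-plateaued, and equal amplitudes force $s_b=t$ for all $b\neq 0$, i.e.\ $F$ is $t$-plateaued.

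To see $t>0$: since $F$ is balanced, $W_F(b,0)=0$ for every $b\neq 0$ (the standard Walsh criterion for balancedness recalled in Section~\ref{sec:walsh}), so each component $F_b$ has a zero in its Walsh spectrum. If $t=0$ then $|W_{F_b}(a)|\in\{0,p^{n/2}\}$ for all $a$, and Parseval's identity $\sum_a|W_{F_b}(a)|^2=p^{2n}$ forces $|W_{F_b}(a)|=p^{n/2}$ for all $a$, contradicting the zero at $a=0$; and $t<0$ is impossible since the amplitude is at least $p^{n/2}$, again by Parseval. (Alternatively, $t=0$ would make $F$ vectorial bent, whence Proposition~\ref{prop:starting} gives $\sum_\beta|F^{-1}(\beta)|^2=2p^n-1$, impossible since $\sum_\beta|F^{-1}(\beta)|^2=p^n$ for a permutation.) With $F$ now known to be $t$-plateaued for a single $t>0$, Theorem~\ref{thm:gcd1} finishes the proof.

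I do not expect a substantial obstacle: the only step that is not a one-line deduction is the single-amplitude claim, and this is the classical observation that the nonzero components of a monomial are pairwise linearly equivalent; everything else is bookkeeping plus an invocation of Theorem~\ref{thm:gcd1}.
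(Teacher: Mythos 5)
Your proposal is correct and follows essentially the same route as the paper: the paper also reduces to Theorem~\ref{thm:gcd1} by observing that $W_F(b,a)=W_F(1,a/b^{1/d})$ (your substitution $x\mapsto cx$ is the same equivalence of components), which gives the single amplitude, and notes $t>0$ before invoking the theorem. You merely spell out the $t>0$ step (balancedness of the permutation plus Parseval) that the paper leaves implicit.
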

\begin{proof}
        It is easy to see and well known that in this case $W_F(b,a)=W_F(1,a/b^{1/d})$ where $1/d$ is the inverse of $d$ modulo $p^n-1$. This immediately implies that $F$ is $t$-plateaued for some $t>0$, and the statement follows from Theorem~\ref{thm:gcd1}.
\end{proof}

\subsection{Almost balanced plateaued functions}\label{sec 4.2}
In~\cite{KoelschPolujan2024}, it was shown that many known theoretical constructions of vectorial bent functions are almost balanced. Now, we show that similar constructions of plateaued functions also naturally induce almost balanced ones. We begin with the analysis of monomial functions.
\begin{theorem} \label{thm:gold}
    Let $n/\gcd(r,n)$ be even and $F \colon \F_{2^n} \rightarrow \F_{2^{n/2}}$ be defined by $F(x)=\Tr^n_{n/2}(x^{2^r+1})$. Then $F$ is plateaued with single amplitude $2^{n/2+d}$ where $d=\gcd(r,n)$. Furthermore, if $d\neq n/2$, then  $F$ is surjective and almost balanced with preimage set sizes
    \[|F^{-1}(0)|=2^{n/2}+(2^{n/2}-1)\cdot 2^d \text{ and }|F^{-1}(c)|=2^{n/2}-2^d\]
    for any $c \in \F_{2^{n/2}}^*$.
\end{theorem}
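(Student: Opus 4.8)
The plan is to pass to the component functions of $F$, recognise them as quadratic Boolean forms on $\F_{2^n}$, read off their amplitudes from the dimension of the radical of the associated bilinear form, and then recover the value distribution from the Walsh transform at $0$ together with one direct solution count in $\F_{2^n}$.

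\textbf{Amplitudes.} For $b\in\F_{2^{n/2}}^*$, transitivity of the trace and $\F_{2^{n/2}}$-linearity of $\Tr^n_{n/2}$ give $F_b(x)=\Tr_1^{n/2}\!\big(b\,\Tr^n_{n/2}(x^{2^r+1})\big)=\Tr_1^n(bx^{2^r+1})$, a quadratic form $Q_b$ on $\F_{2^n}$ with bilinear form $B_b(x,y)=\Tr_1^n\!\big(b(x^{2^r}y+xy^{2^r})\big)$. Rewriting the first summand as a $2^{n-r}$-th power shows $y\in\rad(B_b)$ iff $by^{2^r}=b^{2^{n-r}}y^{2^{n-r}}$; raising to the $2^r$-th power, this becomes $y=0$ or $(by^{2^r+1})^{2^r-1}=1$, i.e.\ $by^{2^r+1}\in\F_{2^d}^*$, so $\rad(B_b)\setminus\{0\}=\{y\in\F_{2^n}^*:y^{2^r+1}\in b^{-1}\F_{2^d}^*\}$. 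From the hypothesis one gets $\gcd(2^r+1,2^n-1)=2^d+1$, so $y\mapsto y^{2^r+1}$ is $(2^d+1)$-to-$1$ onto a subgroup $H\le\F_{2^n}^*$ of order $(2^n-1)/(2^d+1)$; since (again by the hypothesis) $\F_{2^d}^*\subseteq H$ and $b^{-1}\in H$ for every $b\in\F_{2^{n/2}}^*$, we get $b^{-1}\F_{2^d}^*\subseteq H$ and hence $|\rad(B_b)|=2^{2d}$ for all such $b$. Moreover $Q_b$ vanishes on $\rad(B_b)$, because there $by^{2^r+1}\in\F_{2^d}$ and $\Tr_1^n$ restricted to $\F_{2^d}$ equals $(n/d)\Tr_1^d=0$ as $n/d$ is even. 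A quadratic form on $\F_{2^n}$ with a $2d$-dimensional radical on which it vanishes takes Walsh values in $\{0,\pm 2^{(n+2d)/2}\}$, with exactly $2^{n-2d}$ of them nonzero; hence every component $F_b$ with $b\ne0$ is $2d$-plateaued of amplitude $2^{n/2+d}$, so $F$ is plateaued with single amplitude $2^{n/2+d}$ and $\mathcal{L}(F)=2^{n/2+d}$.

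\textbf{Value distribution when $d\ne n/2$.} Then $d\mid n/2$ forces $d<n/2$. First I would compute $|F^{-1}(0)|$ directly: $F(x)=0$ iff $x^{2^r+1}=(x^{2^r+1})^{2^{n/2}}$, i.e.\ $x=0$ or $x^{(2^{n/2}-1)(2^r+1)}=1$, and the number of nonzero such $x$ is $\gcd\!\big((2^{n/2}-1)(2^r+1),2^n-1\big)=(2^{n/2}-1)\gcd(2^r+1,2^{n/2}+1)=(2^{n/2}-1)(2^d+1)$, giving $|F^{-1}(0)|=2^{n/2}+(2^{n/2}-1)2^d$. Since $Q_b$ vanishes on its radical, $W_F(b,0)=\varepsilon_b2^{n/2+d}$ with $\varepsilon_b=\pm1$ for $b\ne0$, and Eq.~\eqref{eq:first}, i.e.\ $\sum_bW_F(b,0)=2^{n/2}|F^{-1}(0)|$, together with $W_F(0,0)=2^n$, forces $\sum_{b\ne0}\varepsilon_b=2^{n/2}-1$, hence $\varepsilon_b=+1$ for every $b\ne0$. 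Plugging $W_F(b,0)=2^{n/2+d}$ ($b\ne0$) into the shifted form of Eq.~\eqref{eq:first}, $|F^{-1}(c)|=2^{-n/2}\sum_b(-1)^{\langle b,c\rangle}W_F(b,0)$, yields $|F^{-1}(c)|=2^{n/2}-2^d$ for $c\ne0$. In particular $|F^{-1}(c)|>0$, so $F$ is surjective, $|\image(F)|=2^{n/2}$, $\Nb_F=(2^{n/2}-1)2^{n/2+2d}$, and plugging into Eq.~\eqref{eq: defect} gives $\Xi(F)=(2^{n/2}-1)2^d\ne0$ with $|F^{-1}(0)|=\tfrac{2^n}{|\image(F)|}+\Xi(F)$; thus $F$ is almost balanced of type $(+)$ by Definition~\ref{def: AB}.

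The main obstacle is the number-theoretic identity $\gcd(2^r+1,2^{n/2}+1)=2^d+1$ (equivalently $2^d+1\mid2^{n/2}+1$): it is exactly what forces $|\rad(B_b)|=2^{2d}$ for \emph{every} $b\in\F_{2^{n/2}}^*$, not merely $b=1$, and what fixes $|F^{-1}(0)|$, so it is the load-bearing consequence of the hypothesis on $n/\gcd(r,n)$. Everything else is routine quadratic-form bookkeeping plus the Walsh inversion already set up in Sections~\ref{sec:walsh}--\ref{sec 3}.
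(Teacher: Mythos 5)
Your route is genuinely different from the paper's (which gets the amplitudes from Gold's bentness criterion combined with Theorem~\ref{thm:platdto1}, and the value distribution from the polar decomposition $x=ab$ with $a\in\F_{2^{n/2}}^*$ and $b$ in the unit circle, followed by Theorem~\ref{thm:AB}), and most of your steps are sound: the radical computation, the vanishing of $Q_b$ on the radical, the sign determination via Eq.~\eqref{eq:first}, and the Walsh inversion for $|F^{-1}(c)|$ all check out. But there is a genuine gap, and you have in fact pointed at it yourself: the claim that $b^{-1}\in H$ for every $b\in\F_{2^{n/2}}^*$, equivalently that $\gcd(2^r+1,2^{n/2}+1)=2^d+1$, does \emph{not} follow from the hypothesis that $n/d$ is even. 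Since $\gcd(r,n/2)=d$ and $r/d$ is odd, one has $\gcd(2^r+1,2^{n/2}+1)=2^d+1$ if and only if $(n/2)/d$ is odd, whereas the stated hypothesis only guarantees $d\mid n/2$. When $(n/2)/d$ is even, $\gcd(2^d+1,2^{n/2}-1)=2^d+1>1$, so most $b\in\F_{2^{n/2}}^*$ lie outside $H$; for those $b$ your own radical computation gives $\rad(B_b)=\{0\}$, i.e.\ $F_b$ is \emph{bent}, and the single-amplitude claim (hence everything downstream) collapses.

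In fairness, the paper's proof rests on exactly the same assertion (``all $\lambda \in \F_{2^{n/2}}$ are $(2^d+1)$-st powers, since $\gcd(2^d+1,2^{n/2}-1)=1$''), so you have faithfully reproduced its gap rather than introduced a new one; the theorem appears to need the stronger hypothesis $n/\gcd(r,n)\equiv 2\pmod 4$. Indeed, for $n=4$, $r=1$ (so $d=1$, $n/d=4$ even, but $(n/2)/d=2$ even) one checks directly that $F(x)=\Tr^4_2(x^3)$ has $F^{-1}(0)=\F_4$, so $|F^{-1}(0)|=4$ rather than the claimed $10$; of the three nonzero components, two are bent and one has amplitude $8$, and $F$ is not even surjective (its value distribution is $4,6,6,0$). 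Under the corrected hypothesis that $(n/2)/d$ is odd, both your number-theoretic identities become true and your argument goes through as a clean, self-contained alternative to the paper's.
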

\begin{proof}
    Consider the plateaued function $G \colon \F_{2^n} \rightarrow \F_{2^n}$ defined by $G(x)=x^{2^r+1}$. Then the component functions $G_\lambda(x) = \Tr^n_1(\lambda x^{2^r+1})$ are bent if and only if $\lambda$ is not a $(2^r+1)$-st power (see~\cite{gold1968maximal}, or also~\cite{dong2013note}). Further, $G$ is $(2^d+1)$-to-$1$ since $\gcd(2^r+1,2^n-1)=2^d+1$. So, by Theorem~\ref{thm:platdto1}, the other component functions have to be plateaued with amplitude $2^{n/2+d}$. So $G_\lambda = \Tr^n_1(\lambda x^{2^r+1})$ is plateaued with amplitude $2^{n/2+d}$ if and only if $\lambda$ is a $(2^d+1)$-st power. 

    Now consider the component functions of $F$. We have for $\lambda \in \F_{2^{n/2}}$ that $$F_\lambda(x)=\Tr^{n/2}_1(\lambda \Tr^n_{n/2}(x^{2^r+1}))=\Tr^n_1(\lambda x^{2^r+1})=G_\lambda(x).$$
    All $\lambda \in \F_{2^{n/2}}$ are $(2^d+1)$-st powers, since $\gcd(2^d+1,2^{n/2}-1)=1$. We conclude that $F$ is plateaued with single amplitude $2^{n/2+d}$. 

    We now show that $F$ is surjective and almost balanced. Since $\gcd(2^r+1,2^{n/2}-1)=1$ we can write every element $x\in \F_{2^n}^*$ uniquely as $x=ab$ where $a \in \F_{2^{n/2}}^*$ and $b \in U=\{x \in \F_2^n \colon x^{2^{n/2}+1}=1\}$. Then 
    \[F(ab)=\Tr^n_{n/2}((ab)^{2^r+1})=a^{2^r+1}\Tr^n_{n/2}(b^{2^r+1}).\]
    Let $S = \{b \in U \colon \Tr^n_{n/2}(b^{2^r+1})=0\}$. Then $F(x)=0$ if only if $x=0$ or $x=ab$ satisfies $b \in S$. We thus have $|F^{-1}(0)|=1+(2^{n/2}-1)|S|$. For every $b \notin S$ we can choose a unique $a$ such that $F(ab)=c$ for any non-zero $c$. So we have $|F^{-1}(c)|=(2^{n/2}+1)-|S|$ for each $c \in \F_{2^{n/2}}^*$.

    We can now use Theorem~\ref{thm:AB} to determine the size of $S$. We know that $F$ has no balanced component functions (since $G$ has none as a monomial that is not a permutation) and $W_F(b,0)= \pm 2^{n/2+d}$ for any nonzero $b$. So by Theorem~\ref{thm:AB}, $F^{-1}(c)=2^{n/2}-2^d$ for any nonzero $c$ and $|F^{-1}(0)|=2^{n/2}+(2^{n/2}-1)\cdot 2^d$ (meaning that $|S|=2^d+1$).  
\end{proof}

In~\cite{carlet2015boolean}, Carlet provided the following family of Maiorana-McFarland plateaued functions with single amplitude. Now, we show that these functions are surjective and can be made balanced or almost balanced, provided that the main building blocks are suitably selected.

\begin{proposition}\label{prop: MM1}
    Let $F\colon\F_{2^m}\times\F_{2^m}\to\F_{2^m}$ defined by $F(x, y) = x\pi(y)+\phi(y)$, for $x, y \in \F_{2^m}$ be a plateaued function with  single amplitude $2^{m+1}$, where $\pi$ is 2-to-1 (that is, when every pre-image by $\pi$ has size 0 or 2) on $\F_{2^m}$ and $\phi\colon \F_{2^m} \rightarrow \F_{2^m}$ is any function. Then, the following hold:
    \begin{itemize}
        \item[1.] If $0\notin\image(\pi)$, then $F$ is balanced.
        \item[2.] If $0\in\image(\pi)$ and $\beta=\phi(y_1)=\phi(y_2)$ for $y_1,y_2\in\F_{2^m}$, s.t. $y_1\neq y_2$ and $\pi(y_1)=\pi(y_2)=0$, then $F$ is almost balanced. More precisely, $|F^{-1}(\beta)|=2^m+2(2^m-1)$ and $|F^{-1}(\alpha)|=2^m-2$ for all $\alpha \in \F_{2^m}\setminus\{ \beta \}$.
        \item[3.] If $0\in\image(\pi)$ and $\beta_1=\phi(y_1)\neq\beta_2=\phi(y_2)$ for $y_1,y_2\in\F_{2^m}$, s.t. $y_1\neq y_2$ and $\pi(y_1)=\pi(y_2)=0$, then the value distribution of $F$ is given by $|F^{-1}(\beta_1)|=|F^{-1}(\beta_2)|=2^{m+1}-2$ and $|F^{-1}(\alpha)|=2^m-2$ for all $\alpha \in \F_{2^m}\setminus\{ \beta_1,\beta_2 \}$.
    \end{itemize}
\end{proposition}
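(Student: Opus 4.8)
The plan is to compute every fibre of $F$ directly from the Maiorana--McFarland structure, read off the three value distributions from that, and then certify in the second case that the resulting distribution actually meets the bounds of Definition~\ref{def: AB}.

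First I would fix $y\in\F_{2^m}$ and look at the affine slice $x\mapsto F(x,y)=x\pi(y)+\phi(y)$. If $\pi(y)\neq 0$ this is a bijection of $\F_{2^m}$, so it contributes exactly one solution $x$ to $F(x,y)=\beta$ for \emph{every} $\beta\in\F_{2^m}$; if $\pi(y)=0$ it is the constant map with value $\phi(y)$, contributing $2^m$ solutions when $\beta=\phi(y)$ and none otherwise. Since $\pi$ is $2$-to-$1$, the set $\pi^{-1}(0)$ is either empty or a two-element set $\{y_1,y_2\}$ with $y_1\neq y_2$, and the identity $|F^{-1}(\beta)|=\sum_{y\in\F_{2^m}}|\{x:F(x,y)=\beta\}|$ turns the whole proposition into bookkeeping. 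For item~1, if $0\notin\image(\pi)$ then every $y$ is of the first type, so every fibre has size $2^m$ and $F$ is balanced. For items~2 and~3 we have $\pi^{-1}(0)=\{y_1,y_2\}$: the $2^m-2$ values $y\notin\{y_1,y_2\}$ each add $1$ to every fibre, while $y_1$ adds $2^m$ to the fibre over $\phi(y_1)$ only, and likewise for $y_2$. If $\phi(y_1)=\phi(y_2)=\beta$ this gives $|F^{-1}(\beta)|=(2^m-2)+2\cdot 2^m=2^m+2(2^m-1)$ and $|F^{-1}(\alpha)|=2^m-2$ for $\alpha\neq\beta$; if $\phi(y_1)=\beta_1\neq\beta_2=\phi(y_2)$ one gets $|F^{-1}(\beta_1)|=|F^{-1}(\beta_2)|=(2^m-2)+2^m=2^{m+1}-2$ and $|F^{-1}(\alpha)|=2^m-2$ otherwise. (One should note that $y_1\neq y_2$ is guaranteed precisely because $\pi$ is $2$-to-$1$, and that these two sub-cases exhaust $0\in\image(\pi)$; the plateauedness and single-amplitude hypotheses are not actually needed for this counting, being automatic once $\pi$ is $2$-to-$1$.)

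It remains to justify the word \emph{almost balanced} in item~2, i.e.\ that one really meets a bound in~\eqref{eq: bounds} rather than merely having one oversized preimage. For $m\ge 2$ all fibres are nonempty, so $F$ is surjective with $|\image(F)|=2^m$, and since all preimage sizes but one coincide, equality holds in the Cauchy--Schwarz step in the proof of Theorem~\ref{th:cs_general}, which forces the exceptional value to hit one of the bounds. Concretely, applying Proposition~\ref{prop:starting} to the computed distribution yields $\Nb_F=\sum_\beta|F^{-1}(\beta)|^2-2^{3m}=(2^m-1)2^{m+2}$, hence $\Xi(F)=2(2^m-1)>0$ and $\tfrac{2^{2m}}{2^m}+\Xi(F)=2^m+2(2^m-1)=|F^{-1}(\beta)|$, so $F$ is almost balanced of type $(+)$. (Alternatively, once surjectivity and almost balancedness are known, Theorem~\ref{thm:AB}(d) with $|W_F(\mathbf{e}_1,0)|=2^{m+1}$ reproduces the two preimage sizes directly.)

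I do not expect a serious obstacle here: the argument is an explicit fibre count. The only points that need genuine care are keeping the two sub-cases $\phi(y_1)=\phi(y_2)$ versus $\phi(y_1)\neq\phi(y_2)$ straight (and confirming they exhaust $0\in\image(\pi)$), and taking the extra step in item~2 of matching the raw preimage sizes to Definition~\ref{def: AB} via $\Nb_F$ and $\Xi(F)$ rather than stopping once the sizes are written down.
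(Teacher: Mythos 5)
Your proposal is correct: the fibre counts, the value of $\Nb_F=(2^m-1)2^{m+2}$, and the identification of case~2 as almost balanced of type $(+)$ all check out. The route is essentially the paper's counting argument with the role of the Walsh transform inverted. The paper first computes $|W_F(b,(0,0))|^2$ directly from the exponential sum (getting $0$ or $2^{2m+2}$ according to whether $\Tr(b(\phi(y_1)-\phi(y_2)))$ is $1$ or $0$), reads off $\Nb_F$ in each case, settles case~1 purely from $\Nb_F=0$, and in case~2 counts only the exceptional fibre and then invokes the equality clause of Proposition~\ref{prop:cs} to get all remaining fibre sizes for free; only case~3 is done by counting every fibre. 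You instead count every fibre in all three cases via the slice decomposition $x\mapsto x\pi(y)+\phi(y)$ (which is exactly the counting the paper performs in cases~2 and~3) and then recover $\Nb_F$ and $\Xi(F)$ \emph{from} the computed distribution via Proposition~\ref{prop:starting} in order to certify the ``almost balanced'' label against Definition~\ref{def: AB}. Your version is slightly more elementary and self-contained (no exponential-sum manipulation, and your parenthetical that plateauedness with single amplitude $2^{m+1}$ is automatic from $\pi$ being $2$-to-$1$ is also correct); the paper's version gets the non-exceptional fibre sizes in case~2 without counting them and makes the link to the imbalance machinery more visible. The only caveat worth keeping is the one you already flagged: surjectivity, and hence the coincidence of the bounds in Theorem~\ref{th:cs_general} with those of Proposition~\ref{prop:cs}, needs $2^m-2>0$, i.e.\ $m\geq 2$, a degenerate edge case the paper also ignores.
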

\begin{proof}
    First, we identify $\F_{2^n}$ with $\F_{2^m}\times \F_{2^m}$. Denoting $\pi^{-1}(z)=\{y\in\F_{2^m}\colon \pi(y)=z\}$, where $z\in\image(\pi)$, we compute for any non-zero $b\in\F_{2^m}$ the value
    \begin{equation*}
    \begin{split}
        |W_F(b,(0,0))|^2=&\left\lvert \sum_{x,y\in\F_{2^m}} (-1)^{\Tr(b(x\pi(y)+\phi(y)))} \right\rvert^2\\
        =&\left\lvert \sum_{y\in\F_{2^m}} (-1)^{\Tr(b\phi(y))}\sum_{x\in\F_{2^m}}(-1)^{Tr(bx\pi(y))} \right\rvert^2\\
        =&\left\lvert 2^m\sum_{\substack{y\in\F_{2^m},\\\pi(y)=0}} (-1)^{\Tr(b\phi(y))} \right\rvert^2\\
        =&\begin{cases}
	0, & 0\notin\image(\pi) \text{ or } \pi(y_1)=\pi(y_2)=0,\Tr(b(\phi(y_1)-\phi(y_2))=1,  \\
    2^{2m+2}, & \pi(y_1)=\pi(y_2)=0,\Tr(b(\phi(y_1)-\phi(y_2))=0.
\end{cases}.
    \end{split}
    \end{equation*}
\textit{Case 1.} If $0\notin\image(\pi)$, we have that $\Nb_F=0$, hence $F$ is balanced.\ \\
\textit{Case 2.} Let $y_1\neq y_2$ be two elements of $\F_{2^m}$ s.t. $\pi(y_1)=\pi(y_2)=0$ and $\phi(y_1)=\phi(y_2)=\beta\in\F_{2^m}$.  Then $\Tr(b(\phi(y_1)-\phi(y_2))=0$ for any $b \in \F_{2^m}$, so $\Nb_F=(2^m-1)\cdot2^{m+2}$, and for any $\alpha\in\F_{2^m}$ we have by Proposition~\ref{prop:cs} 
\begin{equation}\label{eq: MM case 0 inside}
    2^m-2(2^m-1) \leq |F^{-1}(\alpha)| \leq 2^m+2(2^m-1).
\end{equation}
Consider the equation $x\pi(y)+\phi(y)=\beta$. If $y\in\{y_1,y_2 \}$, then the pairs $(x,y_1)$ and $(x,y_2)$, for $x\in\F_{2^m}$, give $2^{m+1}$ solutions of this equation. Assuming now that  $y\in\F_{2^m}\setminus\{y_1,y_2\}$ is fixed, one can see that the pairs $\left((\beta+\phi(y))(\pi(y))^{-1},y\right)\in\F_{2^m}\times\F_{2^m}$ give $2^m-2$ additional solutions. In this way, we have that $|F^{-1}(\beta)| = 2^{m+1}+2^m-2$. Since the upper bound in Eq.~\eqref{eq: MM case 0 inside} is achieved with equality, by Proposition~\ref{prop:cs} we have that $|F^{-1}(\alpha)|=\frac{2^n-|F^{-1}(\beta)|}{2^m-1}=2^m-2$ for all $\alpha \in \F_{2^m}\setminus\{ \beta \}$. \ \\ 
\noindent\textit{Case 3.} Let $y_1\neq y_2$ be two elements of $\F_{2^m}$ s.t. $\pi(y_1)=\pi(y_2)=0$ and $\beta_1=\phi(y_1)\neq\phi(y_2)=\beta_2\in\F_{2^m}$. First, consider the equation $x\pi(y)+\phi(y)=\beta_i$, for a fixed $i=1,2$. If $y=y_i$, then the pairs $(x,y_i)$, for $x\in\F_{2^m}$, give $2^{m}$ solutions of this equation. If $y\neq 
 y_i$, then $x=(\beta_i+\phi(y))(\pi(y))^{-1}$ is uniquely determined by $y\in\F_{2^m}\setminus\{y_1,y_2\}$, and hence we get $2^m-2$ more solutions. Thus $|F^{-1}(\beta_1)|=|F^{-1}(\beta_2)|=2^{m+1}-2$. Assume now that $\alpha\in\F_{2^m}\setminus\{\beta_1,\beta_2\}$. If $y\in\{y_1,y_2\}$, then clearly the equation $x\pi(y)+\phi(y)=\alpha$ has no solutions for any $x\in\F_{2^m}$. If $y\in\F_{2^m}\setminus\{y_1,y_2\}$, then again $x=(\alpha+\phi(y))(\pi(y))^{-1}$ is uniquely determined by a fixed $y\in\F_{2^m}\setminus\{y_1,y_2\}$, and hence we get $2^m-2$ solutions. Thus, $|F^{-1}(\alpha)| = 2^m-2$, for all $\alpha\in\F_{2^m}\setminus\{\beta_1,\beta_2\}$.
\end{proof}
Similarly, we analyze a construction of Maiorana-McFarland plateaued functions~\cite{carlet2015boolean} with bent and semi-bent components.
\begin{proposition}\label{prop: MM2}
    Let $m$ be a positive integer and $\pi$ be a permutation of $\F_2^m$. Let $\pi(y^*)=0$. Let $i$ be an integer coprime with $m$. Then, for the function $F\colon\F_{2^m}\times\F_{2^m}\to\F_{2^m}\times\F_{2^m}$ defined by $F(x, y) = (x\pi(y), x(\pi(y))^{2^i})$ it holds that $|F^{-1}(0,0)|=2^{m+1}-1$ and $|F^{-1}(\alpha)|=1$, for any other $\alpha\in\image(F)\setminus\{(0,0)\}$.
\end{proposition}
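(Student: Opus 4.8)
The plan is to count, for each target $\alpha=(a,b)\in\F_{2^m}\times\F_{2^m}$, the number of pairs $(x,y)$ solving the system $x\pi(y)=a$ and $x(\pi(y))^{2^i}=b$, by splitting according to the value of $\pi(y)$. Since $\pi$ is a permutation whose unique zero is $y^*$, we have $\pi(y)=0$ exactly when $y=y^*$. For $y=y^*$ the system degenerates to $a=b=0$, so it contributes $2^m$ solutions (one for each $x\in\F_{2^m}$) when $\alpha=(0,0)$ and none otherwise. For $y\neq y^*$, set $z=\pi(y)$; as $y$ runs over $\F_{2^m}\setminus\{y^*\}$, $z$ runs bijectively over $\F_{2^m}^{*}$, and the system becomes $xz=a$, $xz^{2^i}=b$ with $z\neq 0$.

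Next I would handle the subcase $x=0$, which forces $a=b=0$ and yields one solution $(0,y)$ for each of the $2^m-1$ choices $y\neq y^*$; and the subcase $x\neq 0$, which forces $a\neq 0$ and $b\neq 0$. In the latter subcase, dividing the two equations gives $z^{2^i-1}=b/a$. Here I would invoke the standard identity $\gcd(2^i-1,2^m-1)=2^{\gcd(i,m)}-1$, which equals $1$ because $\gcd(i,m)=1$; hence $z\mapsto z^{2^i-1}$ is a bijection of $\F_{2^m}^{*}$, so there is a unique $z\in\F_{2^m}^{*}$ with $z^{2^i-1}=b/a$, a unique corresponding $y\neq y^*$ (as $\pi$ is a permutation), and then $x=a/z$ is forced; a one-line check confirms $xz^{2^i}=a\,z^{2^i-1}=b$, so this is a genuine solution. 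Thus whenever $a,b$ are both nonzero there is exactly one preimage, while if exactly one of $a,b$ vanishes the two subcases together produce no solution.

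Finally I would assemble the count: $|F^{-1}(0,0)|=2^m+(2^m-1)=2^{m+1}-1$ from the $y=y^*$ and $x=0$ contributions; $|F^{-1}(a,b)|=1$ whenever $a\neq 0$ and $b\neq 0$; and $|F^{-1}(\alpha)|=0$ for $\alpha=(a,0)$ with $a\neq 0$ or $\alpha=(0,b)$ with $b\neq 0$. In particular $\image(F)=\{(0,0)\}\cup\{(a,b):a,b\neq 0\}$ and every $\alpha\in\image(F)\setminus\{(0,0)\}$ has a single preimage, which is the assertion; as a consistency check, $(2^{m+1}-1)+(2^m-1)^2=2^{2m}$ matches $|\F_{2^m}\times\F_{2^m}|$. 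The only mildly non-routine ingredient is the gcd identity that makes raising to the power $2^i-1$ permute $\F_{2^m}^{*}$; once that is in place the argument is an elementary case analysis, so I do not expect a genuine obstacle beyond careful bookkeeping of the cases.
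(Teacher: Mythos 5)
Your proposal is correct and follows essentially the same route as the paper: count the preimage of $(0,0)$ by the two degenerate cases $y=y^*$ and $x=0$, then use that $z\mapsto z^{2^i-1}$ permutes $\F_{2^m}^*$ (since $\gcd(2^i-1,2^m-1)=2^{\gcd(i,m)}-1=1$) to get a unique preimage for every other image point. Your version additionally pins down the image set explicitly and adds a cardinality consistency check, but the underlying argument is the same.
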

\begin{proof}
    Consider the equation $F(x,y)=(x\pi(y) , x(\pi(y))^{2^i})=(0,0)$. If $y=y^*$, there are $2^m$ solutions $(x,y^*)$, where $x\in\F_{2^m}$. If $x=0$, then $F(0,y)=(0,0)$ has $2^m$ solutions $y\in\F_{2^m}$, hence $|F^{-1}(0,0)|=2^{m+1}-1$. Now assume $x,x' \neq 0$ and $y,y'\neq y^*$. Then $F(x,y)=F(x',y')$ if and only if 
    \[x\pi(y)=x'\pi(y') \text{ and } x\pi(y)^{2^i}=x'\pi(y')^{2^i},\]
    leading to $x'=x\pi(y)/\pi(y')$ and (plugging this into the second equation) $\pi(y)^{2^i-1}=\pi(y')^{2^i-1}$, which only has the solution $y=y'$ (leading to $x=x'$) since $x \mapsto x^{2^i-1}$ is a bijection on $\F_{2^m}$ because $i$ is coprime to $m$. So $|F^{-1}(\alpha)|=1$ for any other $\alpha\in\image(F)\setminus\{(0,0)\}$ as claimed.
\end{proof}

Finally, we note that by applying surjective linear mappings $L\colon\F_p^m\to\F_p^{k}$ to the output of considered in this section (almost) balanced plateaued functions $F\colon\F_p^n\to\F_p^m$ one can get plateaued functions $L\circ F\colon\F_p^n\to\F_p^k$ which are again (almost) balanced, since this construction preserves the (almost) balanced property and the Walsh transform, as noted in~\cite{KoelschPolujan2024}.

\section{Combinatorial structure of plateaued APN functions} \label{sec:APN}

In this section, we consider in detail the connections between imbalance and the Walsh transform of plateaued APN functions, which we further use to analyse the combinatorial structure of these functions. First, we recall several statements that will be useful for our purpose.
\begin{proposition}[{\cite[Lemma 2]{Koelsch2023}}]\label{prop:KKK}
    Let $F \colon \F_2^n \rightarrow \F_2^n$ be an APN function. Then $\Nb_F \leq 2^{n+1}-2$.
\end{proposition}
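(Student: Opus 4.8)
The plan is to bound $\Nb_F$ for an APN function $F\colon\F_2^n\to\F_2^n$ using the standard moment identities for the Walsh transform. Write $X_\beta = |F^{-1}(\beta)|$ for $\beta\in\F_2^n$. By Proposition~\ref{prop:starting} we have $\sum_\beta X_\beta^2 = 2^n + \Nb_F$, while trivially $\sum_\beta X_\beta = 2^n$. The key extra constraint comes from the APN property: since $F(x+a)+F(x)=b$ has at most $2$ solutions for $a\neq 0$, the fibers of $F$ cannot be too large; more precisely, for each fixed $\beta$, the pairs $(x,y)$ with $x\neq y$ and $F(x)=F(y)=\beta$ are controlled. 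I would count $N := |\{(x,y,z)\in(\F_2^n)^3 : F(x)+F(y)=F(x+z)+F(y+z)\}|$ in two ways, or equivalently work directly with $\sum_\beta X_\beta$ weighted combinatorially.

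\textbf{Key steps.} First I would use that APN-ness says each derivative $D_aF$ with $a\neq 0$ is $2$-to-$1$ onto its image, which is the standard fourth-moment characterization: $\sum_{b\neq 0, a}W_F(b,a)^4 = (2^n-1)2^{3n+1}$ (cited in the excerpt via~\cite{berger2006almost}). However, for this particular bound the cleaner route is to observe that APN-ness forces, for every $\beta$, that the number of ordered pairs $(x,y)$ with $x\neq y$, $F(x)=F(y)$ is at most something linear: indeed if $F(x_1)=\dots=F(x_k)=\beta$ with the $x_i$ distinct, then for each pair $i\neq j$, setting $a = x_i+x_j\neq 0$ we get a solution of $D_aF(x_i)=0$, and APN-ness means for fixed $a$ there are at most $2$ such $x$, i.e. at most one unordered pair $\{x,x+a\}$. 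So the differences $x_i+x_j$ over all pairs are \emph{distinct}; there are $\binom k 2$ of them, all nonzero, hence $\binom k2 \le 2^n-1$. This is weaker than what we want directly, so instead I would sum over $\beta$: the total number of ordered pairs $(x,y)$, $x\neq y$, $F(x)=F(y)$ equals $\sum_\beta X_\beta(X_\beta-1) = \sum_\beta X_\beta^2 - 2^n = \Nb_F$ by Proposition~\ref{prop:starting}. Now each such pair determines a nonzero difference $a=x+y$, and for each fixed nonzero $a$ the number of $x$ with $F(x)=F(x+a)$ is at most $2$ (APN), giving at most $2$ ordered pairs per $a$. Hence $\Nb_F = \sum_\beta X_\beta(X_\beta-1) \le 2(2^n-1) = 2^{n+1}-2$.

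\textbf{Main obstacle.} There is essentially no obstacle here once the right identity is used; the only subtlety is recognizing that $\sum_\beta X_\beta(X_\beta-1)$ is exactly the count of ordered collision pairs with distinct entries, which equals $\Nb_F$ via Proposition~\ref{prop:starting} (since $\sum X_\beta^2 = 2^n+\Nb_F$ and $\sum X_\beta = 2^n$), and then bounding this count by the APN condition applied differencewise. One should double-check the edge interpretation: for $p=2$ the equation $F(x)+F(x+a)=b$ has an even number of solutions, so ``at most $2$'' is the APN condition precisely, and the $x=y$ case is excluded because we look at distinct pairs. So I expect the proof to be short: state the two moment identities, rewrite $\Nb_F$ as the collision-pair count, and apply APN-ness to bound contributions per nonzero difference.
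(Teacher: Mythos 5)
Your proof is correct. The paper itself does not prove this proposition but imports it from \cite[Lemma 2]{Koelsch2023}, and your argument — identifying $\Nb_F$ with the number of ordered collision pairs $(x,y)$, $x\neq y$, $F(x)=F(y)$ via Proposition~\ref{prop:starting}, and then bounding the pairs with each fixed nonzero difference $a=x+y$ by $2$ using APN-ness — is exactly the standard argument behind that cited lemma.
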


\begin{proposition}[{\cite[Proposition 9]{carlet2015boolean} and \cite[Corollary 3]{berger2006almost} \label{prop:carlet}}]
      Let $F \colon \F_2^n \rightarrow \F_2^n$ be a plateaued APN function where the amplitudes of the component function $\langle b,F \rangle$ are $2^{\lambda_b}$. Then 
      \begin{equation} \label{eq:carlet}
                \sum_{b \in \F_2^n \setminus \{0\}}2^{2\lambda_b} = 2^{n+1}(2^n-1).
      \end{equation}
      In particular, if $n$ is even, $F$ has at least $2(2^n-1)/3$ bent component functions.
\end{proposition}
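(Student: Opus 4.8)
The plan is to prove the two claims of Proposition~\ref{prop:carlet} in turn, the first being an application of Parseval together with the APN fourth-moment identity, the second an optimization argument over the allowed amplitudes.

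\textbf{Step 1: the amplitude identity.} For a plateaued component $F_b = \langle b,F\rangle$ with amplitude $2^{\lambda_b}$, Parseval's equation $\sum_{a\in\F_2^n}|W_F(b,a)|^2 = 2^{2n}$ forces the number of $a$ with $W_F(b,a)\neq 0$ to be exactly $2^{2n}/2^{2\lambda_b} = 2^{2n-2\lambda_b}$. Hence $\sum_{a\in\F_2^n}|W_F(b,a)|^4 = 2^{2n-2\lambda_b}\cdot 2^{4\lambda_b} = 2^{2n+2\lambda_b}$. Summing over $b\in\F_2^n\setminus\{0\}$ gives $\sum_{b\neq 0,\,a}|W_F(b,a)|^4 = 2^{2n}\sum_{b\neq 0}2^{2\lambda_b}$. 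On the other hand, the APN characterization via the fourth moment (stated in the excerpt, from~\cite[Corollary 1]{berger2006almost}) says $\sum_{a\in\F_2^n,\,b\neq 0}(W_F(b,a))^4 = (2^n-1)2^{3n+1}$ for an APN function. Equating the two expressions and dividing by $2^{2n}$ yields $\sum_{b\neq 0}2^{2\lambda_b} = 2^{n+1}(2^n-1)$, which is Eq.~\eqref{eq:carlet}.

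\textbf{Step 2: the count of bent components when $n$ is even.} When $n$ is even the smallest possible amplitude of any component is $2^{n/2}$ (attained exactly by bent components), and since every component is plateaued with $W_F(b,a)\in\{0,\pm 2^{\lambda_b}\}$ and $2^{2n}/2^{2\lambda_b}$ must be a positive integer (and a non-bent component cannot be balanced for an APN function), each non-bent component has amplitude at least $2^{n/2+1}$, i.e.\ $2^{2\lambda_b}\geq 2^{n+2}$. Let $N_{\mathrm{bent}}$ be the number of bent components and write $N_{\mathrm{bent}} + r = 2^n-1$ where $r$ is the number of non-bent components. Then
\[
2^{n+1}(2^n-1) = \sum_{b\neq 0}2^{2\lambda_b} \geq N_{\mathrm{bent}}\cdot 2^n + r\cdot 2^{n+2} = 2^n N_{\mathrm{bent}} + 2^{n+2}(2^n-1-N_{\mathrm{bent}}).
\]
Rearranging, $2^{n+1}(2^n-1) \geq 2^{n+2}(2^n-1) - 3\cdot 2^n N_{\mathrm{bent}}$, hence $3\cdot 2^n N_{\mathrm{bent}} \geq 2^{n+1}(2^n-1)$, giving $N_{\mathrm{bent}} \geq \tfrac{2}{3}(2^n-1)$, as claimed.

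\textbf{Main obstacle.} The only genuinely delicate point is justifying that a non-bent component of an APN function has amplitude \emph{strictly} larger than $2^{n/2}$ — equivalently, that it cannot be balanced (amplitude ``$2^{n/2}$'' would already be the bent value, so the real content is ruling out that $W_F(b,0)$ or more generally the plateaued amplitude could be smaller, i.e.\ a balanced component). For $p=2$ and $n=m$ this follows because an APN function has no affine components and, more to the point, the amplitude $2^{\lambda_b}$ must satisfy $\lambda_b \geq n/2$ for any plateaued Boolean function on $\F_2^n$ (amplitudes are $2^{(n+t)/2}$ with $t\geq 0$), with $t=0$ exactly the bent case; since parity forces $n+t$ even and $t>0$ then gives $t\geq 2$, we get amplitude at least $2^{n/2+1}$. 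This is elementary but worth stating carefully. Everything else is the bookkeeping in Steps 1–2, and one should remark that the identity in Step 1 is really just the computation already carried out in the proof of Theorem~\ref{thm:diff} specialized to the APN ($\delta=2$) setting, so it may be cited rather than reproduced.
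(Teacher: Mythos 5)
Your proof is correct. Note that the paper does not prove this proposition at all --- it is quoted from \cite{carlet2015boolean} and \cite{berger2006almost} --- so there is no in-paper argument to compare against; what you have written is the standard derivation, and it uses exactly the machinery the paper itself deploys later (the Parseval count of nonzero Walsh values per component and the fourth-moment computation reappear almost verbatim in the proofs of Theorems~\ref{thm:diff} and~\ref{thm:gcd1}). Step~1 is fine: equating $2^{2n}\sum_{b\neq 0}2^{2\lambda_b}$ with the APN fourth-moment value $(2^n-1)2^{3n+1}$ gives Eq.~\eqref{eq:carlet}. Step~2 is also fine. One small remark on your ``main obstacle'': the discussion of balanced components is a red herring. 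Balancedness concerns $W_F(b,0)=0$ and has nothing to do with the amplitude; the only facts you need are that Parseval forces the amplitude of any plateaued component to be at least $2^{n/2}$, that amplitude exactly $2^{n/2}$ occurs precisely in the bent case (all $2^n$ Walsh values nonzero), and that integrality of Walsh values for $p=2$ forces $n+t$ even, so a non-bent component with $n$ even has $t\geq 2$ and hence $2^{2\lambda_b}\geq 2^{n+2}$. With that cleaned up, the optimization $2^{n+1}(2^n-1)\geq 2^nN_{\mathrm{bent}}+2^{n+2}(2^n-1-N_{\mathrm{bent}})$ yields $N_{\mathrm{bent}}\geq 2(2^n-1)/3$ as you state.
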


In the following statement, we provide lower and upper bounds on the imbalance of plateaued APN functions and give a combinatorial characterization of functions achieving the upper and lower bounds with equality.
\begin{proposition}\label{prop: APNs with extreme imbalance}
     Let $n$ be even and $F \colon \F_2^n \rightarrow \F_2^n$ be a plateaued APN function. Then 
     \[(2^{n+1}-2)/3\leq \Nb_F \leq 2^{n+1}-2.\]
     The lower bound is satisfied with equality if and only if $F$ has $2(2^n-1)/3$ bent component functions and $(2^n-1)/3$ balanced component functions. The upper bound is satisfied with equality if and only if no component function of $F$ is balanced.
\end{proposition}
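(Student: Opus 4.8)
My plan is to express $\Nb_F$ through the amplitudes of the component functions of $F$ and then play it against the identity in Proposition~\ref{prop:carlet}. Recall that $\Nb_F=\frac{1}{2^n}\sum_{b\in\F_2^n\setminus\{0\}}|W_F(b,0)|^2$, and that since $F$ is plateaued, each component $\langle b,F\rangle$ satisfies $|W_F(b,0)|\in\{0,2^{\lambda_b}\}$, with $W_F(b,0)=0$ exactly when $\langle b,F\rangle$ is balanced. Writing $B:=\{b\in\F_2^n\setminus\{0\}:\langle b,F\rangle\text{ is not balanced}\}$, this gives the working identity
\[\Nb_F=\frac{1}{2^n}\sum_{b\in B}2^{2\lambda_b}.\]

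For the upper bound I would simply estimate $\sum_{b\in B}2^{2\lambda_b}\le\sum_{b\in\F_2^n\setminus\{0\}}2^{2\lambda_b}=2^{n+1}(2^n-1)$ by Eq.~\eqref{eq:carlet}, so $\Nb_F\le 2^{n+1}-2$ (thereby also recovering Proposition~\ref{prop:KKK}), with equality precisely when $B=\F_2^n\setminus\{0\}$, i.e.\ no component of $F$ is balanced. For the lower bound I would use that a bent Boolean function is never balanced, so every bent component of $F$ belongs to $B$ and contributes $2^{2\lambda_b}=2^n$ to the sum, while all remaining terms are positive; hence, writing $N$ for the number of bent components of $F$, $\Nb_F\ge\frac{1}{2^n}\cdot N\cdot 2^n=N$. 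The last assertion of Proposition~\ref{prop:carlet} gives $N\ge 2(2^n-1)/3$, whence $\Nb_F\ge(2^{n+1}-2)/3$.

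For the equality case of the lower bound I would keep both of the inequalities just used tight. First, $\Nb_F=N$ forces, via the working identity, $B$ to consist of exactly the bent components, i.e.\ every non-bent component of $F$ is balanced; second, $N=2(2^n-1)/3$. Combining the two, the remaining $2^n-1-2(2^n-1)/3=(2^n-1)/3$ components are precisely the balanced ones, which is the claimed structure. Conversely, if $F$ has $2(2^n-1)/3$ bent and $(2^n-1)/3$ balanced components, these two sets are disjoint (bent functions are not balanced) and together cover $\F_2^n\setminus\{0\}$, so $B$ is exactly the set of bent components and $\Nb_F=N=2(2^n-1)/3=(2^{n+1}-2)/3$.

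I do not expect a genuine obstacle here: once Proposition~\ref{prop:carlet} is in hand, everything reduces to bookkeeping with the definition of $\Nb_F$. The points that need a little care are that bent components are never balanced and that, conversely, every balanced component of a plateaued APN function is non-bent (so that in the extremal case the number of balanced components is forced to be $2^n-1-N$), together with making sure the chain of inequalities stays tight throughout each equality argument.
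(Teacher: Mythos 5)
Your proof is correct and follows essentially the same route as the paper: both bound $\Nb_F$ from below by counting the forced contribution of the (at least $2(2^n-1)/3$) bent components and from above via the fourth-moment identity of Proposition~\ref{prop:carlet}, with the equality cases read off from which Walsh coefficients $W_F(b,0)$ vanish. Your write-up is in fact slightly more careful on the ``only if'' directions of the two equality characterizations, which the paper's proof states rather tersely.
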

\begin{proof}
    By Proposition~\ref{prop:carlet}, $F$ has at least $2(2^n-1)/3$ bent component functions. Since bent functions cannot be balanced, we have for these components $\langle b,F \rangle$ that $W_F(b,0)^2=2^n$. Thus
    \[\Nb_F=\frac{1}{2^n}\sum_{b \in \F_2^n \setminus \{0\}}|W_F(b,0)|^2 \geq \frac{1}{2^n} \cdot \frac{2(2^n-1)}{3} \cdot 2^n=\frac{2(2^n-1)}{3}\]
    with equality if $W_F(b,0)=0$ for $(2^n-1)/3$ component functions.
    
    The upper bound is already proven in Proposition~\ref{prop:KKK}. We have $\Nb_F \leq  2^{n+1}-2$ by Proposition~\ref{prop:KKK}. Moreover, $|W_F(b,0)| \in \{0,2^{\lambda_b}\}$, so $\Nb_F$ is maximized if $|W_F(b,0)|^2=2^{2\lambda_b}$ for all $b\in \F_2^n \setminus \{0\}$. The result then follows from Proposition~\ref{prop:carlet}.
\end{proof}

In the following statement, we derive some useful results on the imbalance of plateaued APN functions.
\begin{lemma} \label{lem:balanced}
    Let $n$ be even and $F \colon \F_2^n \rightarrow \F_2^n$ be a plateaued APN function. Then $\Nb_F \equiv 2 \pmod 4$. In particular, if $F$ has at least one balanced component, then $\Nb_F \leq 2^{n+1}-6.$
\end{lemma}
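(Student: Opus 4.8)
The plan is to exploit the characterization of plateaued APN functions in Proposition~\ref{prop:carlet}, which tells us $\sum_{b \neq 0} 2^{2\lambda_b} = 2^{n+1}(2^n-1)$ where $|W_F(b,0)| \in \{0, 2^{\lambda_b}\}$, together with the fact that $\lambda_b = n/2$ for bent components (since $n$ is even, bent components have minimal amplitude $2^{n/2}$). The key observation is that $\Nb_F = \frac{1}{2^n}\sum_{b \neq 0}|W_F(b,0)|^2$, where each nonzero summand $|W_F(b,0)|^2 = 2^{2\lambda_b}$ with $\lambda_b \geq n/2$.

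First I would split the component functions into three types: balanced ones (contributing $0$ to $\Nb_F$), bent ones (contributing $2^n/2^n = 1$ each to $\Nb_F$), and non-bent non-balanced ones (contributing $2^{2\lambda_b}/2^n = 2^{2\lambda_b - n}$ with $\lambda_b \geq n/2 + 1$, hence contributing a multiple of $4$ each). So modulo $4$, $\Nb_F$ is congruent to the number of bent components. Next I would pin down the number of bent components modulo $4$: let $N_{\text{bent}}$, $N_{\text{bal}}$, and for $j \geq 1$ let $N_j$ be the number of components with amplitude $2^{n/2+j}$. Then $N_{\text{bent}} + N_{\text{bal}} + \sum_{j \geq 1} N_j = 2^n - 1$, and Proposition~\ref{prop:carlet} gives $N_{\text{bent}} \cdot 2^n + \sum_{j\geq 1} N_j \cdot 2^{n+2j} = 2^{n+1}(2^n - 1)$, i.e. $N_{\text{bent}} + \sum_{j\geq 1} N_j 2^{2j} = 2(2^n-1) = 2^{n+1} - 2$. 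Reducing this identity modulo $4$: since $2j \geq 2$, all the $N_j 2^{2j}$ terms vanish mod $4$, so $N_{\text{bent}} \equiv 2^{n+1} - 2 \equiv 2 \pmod 4$ (using $n \geq 2$). Combining with the first observation, $\Nb_F \equiv N_{\text{bent}} \equiv 2 \pmod 4$.

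For the second part, if $F$ has at least one balanced component, then $N_{\text{bal}} \geq 1$, so strictly fewer than $2^n - 1$ components have a nonzero Walsh value at $0$; in particular $\Nb_F < 2^{n+1} - 2$ (the upper bound from Proposition~\ref{prop:KKK}, attained only when no component is balanced by Proposition~\ref{prop: APNs with extreme imbalance}). Since $\Nb_F \equiv 2 \pmod 4$ and $\Nb_F \leq 2^{n+1} - 2$ but $\Nb_F \neq 2^{n+1}-2$, the largest possible value congruent to $2$ mod $4$ that is strictly smaller is $2^{n+1} - 2 - 4 = 2^{n+1} - 6$, giving $\Nb_F \leq 2^{n+1} - 6$ as claimed. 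The only mild subtlety to check is the edge case $n = 2$, where $2^{n+1} - 2 = 6$ and $2^{n+1} - 6 = 2$, and one should verify the congruence $N_{\text{bent}} \equiv 2 \pmod 4$ still reads correctly there (indeed $2^{n+1} - 2 = 6 \equiv 2$), so no separate argument is needed. The main obstacle is really just carefully tracking which contributions vanish modulo $4$ and ensuring the amplitude of a bent component in even dimension is exactly $2^{n/2}$ (not larger), which is immediate from the definition of plateaued/bent since the Walsh spectrum of a bent function has constant modulus $2^{n/2}$.
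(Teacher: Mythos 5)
Your proof is correct and follows essentially the same route as the paper: both arguments rest on Proposition~\ref{prop:carlet} together with the observation that every non-bent amplitude contributes a multiple of $4$ to $\Nb_F$ (the paper simply writes $\Nb_F = 2^{n+1}-2-\sum_{b}2^{2\lambda_b-n}$, the sum running over the balanced components $b$, and reads off both claims from the fact that each subtracted term is divisible by $4$ and at least $4$). One minor bookkeeping slip: your counts $N_j$ cannot simultaneously exclude the balanced components (as your identity $N_{\text{bent}}+N_{\text{bal}}+\sum_{j\geq 1} N_j = 2^n-1$ requires) and make $N_{\text{bent}}\cdot 2^n+\sum_{j\geq 1} N_j\, 2^{n+2j}=2^{n+1}(2^n-1)$ hold exactly (balanced components are still plateaued, so their amplitudes also enter the sum in Proposition~\ref{prop:carlet}), but the discrepancy is divisible by $2^{n+2}$, so your conclusion $N_{\text{bent}}\equiv 2 \pmod 4$ and everything downstream is unaffected.
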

\begin{proof}
     Denote again the  amplitudes of the component functions $\langle b,F \rangle$ by $2^{\lambda_b}$, so $|W_F(b,0)| \in \{0,2^{\lambda_b}\}$. Let $S \subset \F_2^n \setminus \{0\}$ be the set of balanced components, i.e., $b \in S$ if and only if $\langle b,F \rangle$ is balanced. 
     
     A balanced component function has an amplitude of at least $2^{n/2+1}$ since bent functions cannot be balanced. So, using Proposition~\ref{prop:carlet},
    $$\Nb_F \leq 2^{n+1}-2-\frac{1}{2^n}\sum_{b \in S}2^{2\lambda_b}=2^{n+1}-2-\sum_{b \in S}2^{2\lambda_b-n}.$$
    We have $2\lambda_b\geq n+2$, so the result follows since $\sum_{b \in S}2^{2\lambda_b-n} \equiv 0 \pmod 4$.
\end{proof}
With this technical result, we further refine the number of bent components of plateaued APN functions.
\begin{proposition}
      Let $n$ be even and $F \colon \F_2^n \rightarrow \F_2^n$ be a plateaued APN function. The number of bent components of $F$ is $2 \pmod 4$.
\end{proposition}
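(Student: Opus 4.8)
The \emph{plan} is to read the count off the amplitude identity in Proposition~\ref{prop:carlet}. Write $B$ for the number of bent components of $F$, so that among the $2^n-1$ nonzero components exactly $B$ have amplitude $2^{n/2}$ and the remaining $2^n-1-B$ are non-bent, with amplitudes $2^{\lambda_b}$.

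First I would record the elementary fact that a non-bent plateaued Boolean function on $\F_2^n$ with $n$ even has amplitude $2^{\lambda_b}$ with $\lambda_b \geq n/2+1$. Indeed, setting $s_b = 2\lambda_b - n$ for the plateauedness parameter, Parseval's identity shows that such a component has $2^{n-s_b}$ nonzero Walsh coefficients, which forces $s_b \geq 0$; moreover $s_b \equiv n \equiv 0 \pmod 2$, so a non-bent component has $s_b \geq 2$, i.e.\ $2\lambda_b - n \geq 2$. This is the same parity observation already used in Lemma~\ref{lem:balanced}.

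Next I would split Eq.~\eqref{eq:carlet}, namely $\sum_{b \in \F_2^n \setminus\{0\}} 2^{2\lambda_b} = 2^{n+1}(2^n-1)$, according to whether $\langle b,F\rangle$ is bent. The bent components contribute exactly $B\cdot 2^n$, so dividing the identity by $2^n$ gives
\[
B + \sum_{b \text{ non-bent}} 2^{2\lambda_b - n} = 2(2^n-1).
\]
Every term in the remaining sum equals $2^{s_b}$ with $s_b \geq 2$, hence is divisible by $4$. Reducing the displayed equation modulo $4$ yields $B \equiv 2(2^n-1) \pmod 4$, and since $2^n-1$ is odd this is $B \equiv 2 \pmod 4$, as claimed.

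There is no genuine obstacle here: the statement is a two-line corollary of Proposition~\ref{prop:carlet} once the amplitude lower bound for non-bent components is in place, and the only point requiring care is precisely that lower bound, i.e.\ ruling out amplitudes strictly between $2^{n/2}$ and $2^{n/2+1}$, where the evenness of $n$ (hence of $s_b$) is what is used. A less efficient alternative would be to try to deduce the parity of $B$ from $\Nb_F$ via Lemma~\ref{lem:balanced}, but $W_F(b,0)$ may vanish on non-bent components, so $\Nb_F$ does not determine $B$; the amplitude sum~\eqref{eq:carlet} is the right invariant to exploit.
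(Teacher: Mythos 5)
Your proof is correct. The paper reaches the same congruence by a slightly less direct route: it first establishes in Lemma~\ref{lem:balanced} that $\Nb_F \equiv 2 \pmod 4$, and then reduces the identity $2^n\Nb_F=\sum_{b\in\F_2^n\setminus\{0\}}W_F(b,0)^2$ modulo $2^{n+2}$; since each bent component contributes exactly $2^n$ and each non-bent component contributes either $0$ or a multiple of $2^{n+2}$, this yields $2^{n+1}\equiv 2^n\mathcal{B}(F)\pmod{2^{n+2}}$. You instead reduce the amplitude identity~\eqref{eq:carlet} of Proposition~\ref{prop:carlet} directly modulo $2^{n+2}$, which is a little cleaner and bypasses Lemma~\ref{lem:balanced} entirely; the essential inputs are the same in both arguments, namely Eq.~\eqref{eq:carlet} and the bound $2\lambda_b\ge n+2$ for non-bent components (which, as you note, rests on the integrality of the Walsh coefficients forcing $n+s_b$ to be even). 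One small correction to your closing remark: the detour through $\Nb_F$ is not actually blocked by the possible vanishing of $W_F(b,0)$ on non-bent components --- a vanishing coefficient contributes $0$ and a non-vanishing non-bent one contributes $2^{2\lambda_b}\ge 2^{n+2}$, so modulo $2^{n+2}$ the sum $\sum_{b}W_F(b,0)^2$ still isolates $2^n\mathcal{B}(F)$ even though $\Nb_F$ alone does not determine $\mathcal{B}(F)$; this is precisely how the paper's proof proceeds.
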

\begin{proof}
    We have by definition
    \begin{equation}
    \label{eq:def}
        2^n\Nb_F = \sum_{\beta \in \F_2^n\setminus \{0\}} W_F(b,0)^2.
    \end{equation}
    
    By Lemma~\ref{lem:balanced}, we have that $\Nb_F \equiv 2 \pmod 4$, so taking Eq.~\eqref{eq:def} modulo $2^{n+2}$ yields
    \[2^{n+1} \equiv 2^n\mathcal{B}(F) \pmod{2^{n+2}},\]
    where $\mathcal{B}(F)$ denotes the number of bent components functions of $F$. The result follows immediately.
\end{proof}
Recall the following classification of value distributions of APN functions with the minimum image set size.
\begin{theorem}{\cite[Theorem 3]{Koelsch2023}} \label{thm:KKK}
     Let $n$ be even and $F \colon \F_2^n \rightarrow \F_2^n$ be an APN function. Then $|\image(F)| \geq \frac{2^n+2}{3}$. If equality holds then $F$ has one of the following value distributions:
     \begin{enumerate}
         \item $X_1=1$, $X_2=\dots=X_{(2^n+2)/3}=3$,
         \item $X_1=X_2=2$, $X_3=\dots=X_{(2^n+2)/3}=3$,
         \item $X_1=X_2=X_3=2$, $X_4=\dots=X_{(2^n-1)/3}=3$, $X_{(2^n+2)/3}=4$.
     \end{enumerate}
\end{theorem}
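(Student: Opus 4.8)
The plan is to run a linear-programming--style argument on the multiset of preimage sizes $X_1\le\dots\le X_k$ (with $k=|\image(F)|$), using only two global constraints plus one divisibility fact. The constraints are $\sum_{i=1}^k X_i=2^n$ and $\sum_{i=1}^k X_i^2\le 3\cdot 2^n-2$; the latter follows from Proposition~\ref{prop:starting} with $p=2$, $m=n$, which gives $\sum_{\beta\in\F_2^n}|F^{-1}(\beta)|^2=2^n+\Nb_F$, combined with the APN bound $\Nb_F\le 2^{n+1}-2$ of Proposition~\ref{prop:KKK}.

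First I would pass to the counting function $a_j:=|\{\beta:|F^{-1}(\beta)|=j\}|$, so that $k=\sum_{j\ge1}a_j$, $\sum_j j\,a_j=2^n$, and $\sum_j j^2a_j\le 3\cdot 2^n-2$. Subtracting three times the first equation from the inequality gives $\sum_j j(j-3)\,a_j\le -2$. Since $j(j-3)=-2$ for $j\in\{1,2\}$, equals $0$ for $j=3$, and is positive for $j\ge4$, this rearranges to $a_1+a_2\ge 1+\tfrac12\sum_{j\ge4}j(j-3)\,a_j$. The quantity to be bounded below is $3k-2^n=\sum_j(3-j)a_j=2a_1+a_2-\sum_{j\ge4}(j-3)a_j$; using $2a_1+a_2\ge a_1+a_2$ together with the previous inequality, a one-line computation yields
\[
3k-2^n\ \ge\ 1+\sum_{j\ge4}\frac{(j-2)(j-3)}{2}\,a_j\ \ge\ 1+\sum_{j\ge4}a_j.
\]
If $F$ has a preimage set of size at least $4$, the right-hand side is at least $2$ and we are done. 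Otherwise all preimage sets have size at most $3$ and we only obtain $3k-2^n\ge 1$; but $n$ even forces $2^n\equiv 1\pmod 3$, hence $3k-2^n\equiv 2\pmod 3$, so in fact $3k-2^n\ge 2$. This establishes $|\image(F)|\ge(2^n+2)/3$.

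For the equality case $3k-2^n=2$ I would trace back each inequality. The display forces $\sum_{j\ge4}\frac{(j-2)(j-3)}{2}a_j\le 1$; since $\frac{(j-2)(j-3)}{2}=1$ for $j=4$ and $\ge 3$ for $j\ge5$, there is at most one preimage set of size $\ge 4$, and if there is one it has size exactly $4$. If such a set exists, equality in $2a_1+a_2\ge a_1+a_2$ forces $a_1=0$, equality in $\sum_j j(j-3)a_j\le -2$ then forces $a_2=3$, and all remaining preimages have size $3$: this is distribution~(3). If no preimage has size $\ge 4$, equality reads $2a_1+a_2=2$, so $(a_1,a_2)\in\{(1,0),(0,2)\}$ with all remaining preimages of size $3$: these are distributions~(1) and~(2). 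A final check that the implied number of size-$3$ preimages is a non-negative integer uses $2^n\equiv 1\pmod 3$ again (and $2^n\ge 10$ in case~(3)).

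The main obstacle is that the two global constraints alone only give $|\image(F)|\ge 2^{2n}/(3\cdot 2^n-2)$ (Proposition~\ref{prop:imageset}), which is strictly smaller than $(2^n+2)/3$; the gap must be closed by injecting the integrality of the $X_i$ and the congruence $2^n\equiv 1\pmod 3$ at precisely the right step. The delicate part is therefore organizing the chain of elementary inequalities so that it is simultaneously tight enough to yield the sharp bound and rigid enough that the three value distributions are forced by the equality analysis, and then verifying that each candidate distribution is internally consistent.
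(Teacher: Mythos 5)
Your argument is correct: the two moment constraints $\sum_i X_i = 2^n$ and $\sum_i X_i^2 = 2^n + \Nb_F \le 3\cdot 2^n - 2$ (Propositions~\ref{prop:starting} and~\ref{prop:KKK}), combined with integrality of the $a_j$ and the congruence $2^n \equiv 1 \pmod 3$ for even $n$, do force the bound and, in the equality case, exactly the three listed distributions; I checked the chain of inequalities and the traceback and found no gaps. Note that the paper itself gives no proof of this statement --- it is imported verbatim as \cite[Theorem 3]{Koelsch2023} --- so there is no in-paper argument to compare against; your derivation is essentially the same second-moment-plus-integrality argument used in that reference, and it is self-contained given the two cited propositions.
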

Several examples of APN functions with the first value distribution are well-known; these are exactly $3$-to-$1$ functions. The existence of APN functions with the other two preimage distributions is an open problem (see~\cite[Open problem 1]{Koelsch2023}). Using our tools, we can show that there are no \emph{plateaued} APN functions with the second value distribution.

\begin{theorem}\label{prop: 2nd impossible}
     Let $n$ be even and $F \colon \F_2^n \rightarrow \F_2^n$ be a plateaued APN function. Then $F$ does not have the second value distribution listed in Theorem~\ref{thm:KKK}.
\end{theorem}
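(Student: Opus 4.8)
The plan is to compute the imbalance $\Nb_F$ forced by the assumed value distribution and then contradict the congruence $\Nb_F \equiv 2 \pmod 4$ established in Lemma~\ref{lem:balanced}.

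Suppose, for contradiction, that $F$ realizes the second value distribution of Theorem~\ref{thm:KKK}. Then $|\image(F)| = (2^n+2)/3$, exactly two of the preimage sets have size $2$, and the remaining $(2^n+2)/3 - 2 = (2^n-4)/3$ nonempty preimage sets have size $3$. Feeding this into Proposition~\ref{prop:starting} with $p=2$ and $m=n$ gives
$$2^n + \Nb_F \;=\; \sum_{\beta \in \F_2^n} |F^{-1}(\beta)|^2 \;=\; 2\cdot 2^2 + \frac{2^n-4}{3}\cdot 3^2 \;=\; 3\cdot 2^n - 4,$$
so that $\Nb_F = 2^{n+1} - 4$. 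Since $n$ is even we have $n \ge 2$, hence $4 \mid 2^{n+1}$ and therefore $\Nb_F \equiv 0 \pmod 4$. This contradicts Lemma~\ref{lem:balanced}, which guarantees $\Nb_F \equiv 2 \pmod 4$ for every plateaued APN function on $\F_2^n$ with $n$ even. Hence no plateaued APN function can have the second value distribution.

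I do not expect a genuine obstacle here: the whole argument is the one-line evaluation of $\Nb_F$ via Proposition~\ref{prop:starting}, combined with the parity constraint already proved. As a cross-check, one could instead note that $\Nb_F = 2^{n+1}-4 \ne 2^{n+1}-2$ forces $F$ to possess a balanced component by Proposition~\ref{prop: APNs with extreme imbalance}, and then invoke the ``in particular'' part of Lemma~\ref{lem:balanced} to obtain $\Nb_F \le 2^{n+1}-6 < 2^{n+1}-4$, again a contradiction; but the mod-$4$ route is cleaner and avoids appealing to the characterization of the extremal imbalance case.
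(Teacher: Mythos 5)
Your proof is correct and is essentially identical to the paper's: both compute $\Nb_F = 2^{n+1}-4$ from the assumed value distribution via Proposition~\ref{prop:starting} and then contradict the congruence $\Nb_F \equiv 2 \pmod 4$ of Lemma~\ref{lem:balanced}. Your explicit justification that $2^{n+1}-4 \equiv 0 \pmod 4$ just spells out the step the paper leaves implicit.
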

\begin{proof}
    Assume $F$ has the second value distribution listed in Theorem~\ref{thm:KKK}. Then
     \[\sum_{\beta \in \F_{2}^m} |F^{-1}(\beta)|^2=2\cdot 4 + \frac{2^n-4}{3} \cdot 9=3\cdot 2^n-4.\]
     By Proposition~\ref{prop:starting}, we have 
     \[\sum_{\beta \in \F_{2}^m} |F^{-1}(\beta)|^2=2^n+\Nb_F,\]
     so $\Nb_F=2^{n+1}-4$. This is a contradiction to Lemma~\ref{lem:balanced}.
\end{proof}

We can also show that plateaued APN functions with minimal image set size cannot have any balanced component functions.
\begin{theorem} \label{thm:nobalanced}
    Let $n$ be even and $F \colon \F_2^n \rightarrow \F_2^n$ be a plateaued APN function. If $|\image(F)|=\frac{2^n+2}{3}$, then $F$ has no balanced component functions.
\end{theorem}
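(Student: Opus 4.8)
The plan is to combine the classification of value distributions in Theorem~\ref{thm:KKK} with the characterization of plateaued APN functions attaining extremal imbalance in Proposition~\ref{prop: APNs with extreme imbalance}. Since $|\image(F)| = \frac{2^n+2}{3}$, Theorem~\ref{thm:KKK} tells us that $F$ has one of the three listed value distributions, and Theorem~\ref{prop: 2nd impossible} rules out the second one for plateaued APN functions. So it remains to treat the first and third cases.

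In both remaining cases I would compute $\sum_{\beta \in \F_2^n} |F^{-1}(\beta)|^2$ directly from the value distribution. For the first distribution this equals $1 + \frac{2^n-1}{3}\cdot 9 = 3\cdot 2^n - 2$. For the third distribution, one counts that there are exactly three preimages of size $2$, exactly $\frac{2^n-10}{3}$ preimages of size $3$ and one preimage of size $4$ (so that the total number of nonzero preimages is indeed $3 + \frac{2^n-10}{3} + 1 = \frac{2^n+2}{3}$), whence the second moment is $3\cdot 4 + \frac{2^n-10}{3}\cdot 9 + 16 = 3\cdot 2^n - 2$ as well. By Proposition~\ref{prop:starting} this forces $\Nb_F = 2^{n+1}-2$ in either case.

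Finally, Proposition~\ref{prop: APNs with extreme imbalance} states that a plateaued APN function in an even number of variables satisfies $\Nb_F = 2^{n+1}-2$ if and only if none of its component functions is balanced. Applying this to $F$ yields the claim.

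I do not expect a genuine obstacle here: the work is essentially bookkeeping. The only points requiring care are verifying that the number of preimages of size $3$ in the third distribution is $\frac{2^n-10}{3}$, so that the preimage count matches $\frac{2^n+2}{3}$, and noting the (mildly surprising) coincidence that the first and third distributions produce the same second moment and hence the same value of $\Nb_F$, which is exactly the extremal value handled by Proposition~\ref{prop: APNs with extreme imbalance}.
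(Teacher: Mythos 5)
Your proof is correct, and it takes a genuinely different route from the paper's. The paper argues directly: by Proposition~\ref{prop:imageset}, $|\image(F)| \geq \frac{2^{2n}}{2^n+\Nb_F}$, and if $F$ had a balanced component then Lemma~\ref{lem:balanced} would force $\Nb_F \leq 2^{n+1}-6$, whence $|\image(F)| \geq \frac{2^{2n}}{3\cdot 2^n-6} > \frac{2^n+2}{3}$ --- a contradiction with no case analysis at all. You instead go through the full classification of Theorem~\ref{thm:KKK}, eliminate the second distribution via Theorem~\ref{prop: 2nd impossible}, and compute the second moment in the two surviving cases; your arithmetic checks out (both give $\sum_\beta |F^{-1}(\beta)|^2 = 3\cdot 2^n-2$, hence $\Nb_F = 2^{n+1}-2$ by Proposition~\ref{prop:starting}), and the forward implication of the equality case in Proposition~\ref{prop: APNs with extreme imbalance} then finishes the argument. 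The trade-off: the paper's proof is shorter, avoids invoking the classification theorem, and in fact only needs the \emph{inequality} $|\image(F)|\geq \frac{2^n+2}{3}$ rather than the list of distributions; your proof is longer but extracts more along the way, namely that every plateaued APN function with minimal image set size attains the extremal imbalance $\Nb_F = 2^{n+1}-2$ exactly, and it makes visible the (not entirely obvious) coincidence that the first and third distributions share the same second moment. Both arguments ultimately rest on the same parity obstruction, Lemma~\ref{lem:balanced} --- the paper uses it directly, while you use it indirectly through Theorem~\ref{prop: 2nd impossible}.
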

\begin{proof}
    By Proposition~\ref{prop:imageset} we have 
    \[|\image(F)| \geq \frac{2^{2n}}{2^n+\Nb_F}.\]
    If $F$ has at least one balanced component, we can bound $\Nb_F\leq 2^{n+1}-6$ by Lemma~\ref{lem:balanced} and
     \[|\image(F)| \geq \frac{2^{2n}}{3\cdot 2^n-6}.\]
    It remains to show that $\frac{2^{2n}}{3\cdot 2^n-6}>\frac{2^n+2}{3}$. This is verified using elementary means: After multiplying by the denominators, we have
    \[3\cdot 2^{2n}>(2^n+2)(3\cdot 2^n-6)=3\cdot 2^{2n}-12,\]
    which is clearly true.
\end{proof}

\section{Conclusion}\label{sec: concl}

In this paper, we provided more insights into the possible combinatorial structure of plateaued functions, particularly APN functions, and developed further techniques for the analysis of value distributions of vectorial functions. We believe that using these tools it should be possible to answer the following questions, thus contributing to a deeper understanding of various classes of cryptographically significant functions. 
\begin{itemize}
    \item[1.] Value distributions of APN functions with the minimum image set size have been considered in detail in~\cite{Koelsch2023}. As mentioned in Theorem~\ref{thm:KKK}, such APN functions can have only 3 possible value distributions. Within the class of plateaued APN functions with the minimum image set size, many constructions have the first value distribution (in the sense of Theorem~\ref{thm:KKK}), and the second value distribution is impossible, as we showed in Theorem~\ref{prop: 2nd impossible}. Given these observations, it is natural to ask whether the third value distribution is possible for plateaued APN functions with the minimum image set size.
    \item[2.] In~\cite{PottPasEt}, it was shown that the maximum number of bent component functions of a vectorial function $F \colon \F_2^n \to \F_2^n$ is $2^{n} - 2^{n/2}$. Later, in~\cite{Mesnager2019}, it was proven that APN plateaued functions cannot have the maximum number of bent components. The best current bound on the number of bent components of a plateaued APN functions was found recently in~\cite{Koelsch2025}. It is however unclear if these bounds are generally tight.  We believe that providing better bounds on the number of bent components of plateaued APN functions is an interesting and important research direction.
    \item[3.] Generally, provide more primary and secondary constructions of (vectorial) plateaued functions, especially with single amplitude and special value distributions.
\end{itemize}
  
\section*{Acknowledgments} We thank the anonymous reviewers for their detailed feedback, spotting several errors and typos. In particular, we thank one reviewer for providing an idea for a simpler proof of Theorem 4.1.


\appendix
\section{Appendix: Proof of Lemma~\ref{lem:podd}}\label{sec: Appendix}
    \begin{proof}[Proof of Lemma~\ref{lem:podd}]
          Let us now show that $W_F(b,0) \in \Z$. So assume $p>2$. Since $F$ is $d$-to-1, $F$ has no balanced components (see the proof of Theorem~\ref{thm:platdto1}). We then have by~\cite[Theorem 2]{Hyun16} that for any nonzero $b$ that $W_F(b,0)=\epsilon p^{(n+t)/2}\zeta_p^r$, $\epsilon \in \{-1,1\}$ if $n+t$ is even or $p\equiv 1 \pmod 4$ and $\epsilon \in \{-i,i\}$, if $n+t$ is odd and $p \equiv 3 \pmod 4$.  Write (using the standard integral basis of roots of unity for the cyclotomic field) $C = \sum_{i=1}^{p-1} a_i \zeta_p^i$ with $a_i \in \Z$. Then
    \begin{equation} \label{eq:walshp}
    W_F(b,0)=\epsilon p^{(n+t)/2}\zeta_p^r=d\cdot C+1 = \sum_{i=1}^{p-1} (da_i-1) \zeta_p^i,
    \end{equation}
    where we use that the sum of all roots of unity is $0$.
    If $n+t$ is even then $\epsilon p^{(n+t)/2} \in \Z$. So if $r\neq 0$, by the properties of the integral basis,  we have $da_i=1$ for all $i \neq r$, which contradicts $d>1$ and $a_i \in \Z$. So $r=0$ and $W_F(b,0) \in \Z$.
    
    Now consider the case where $n+t$ is odd. Recall that (see e.g.~\cite{Nyberg91}), 
    $ \epsilon \sqrt{p} = \sum_{j=1}^{p-1}\left( \frac{j}{p}\right) \zeta_p^j$, where $\left( \frac{j}{p}\right)$ is the Legendre symbol. Then 
    \[\epsilon p^{(n+t)/2}\zeta_p^r = p^{(n+t-1)/2} \epsilon \sqrt{p} \zeta_p^r=p^{(n+t-1)/2}\sum_{j=1}^{p-1}\left( \frac{j}{p}\right) \zeta_p^{j+r},\]
    so combining this with Eq.~\eqref{eq:walshp} gives
    \[\sum_{j=1}^{p-1}\left(p^{(n+t-1)/2}\cdot \left( \frac{j}{p}\right)\right) \zeta_p^{j+r} = \sum_{i=1}^{p-1} (da_i-1) \zeta_p^i.\]
    If $r=0$ this implies that $da_i = p^{(n+t-1)/2}\left( \frac{i}{p}\right)+1$ for all $i$, meaning that $d$ divides both $p^{(n+t-1)/2}+1$ and $p^{(n+t-1)/2}-1$, so $d|2$, contradicting $d>2$. If $r\neq 0$ then 
    \begin{align*}
        p^{(n+t-1)/2}\left(\sum_{\substack{j=1\\j \neq p-r}}^{p-1} \left( \frac{j}{p}\right) \zeta_p^{j+r} + \left(\frac{p-r}{p}\right)\right) &= 
         p^{(n+t-1)/2}\left(\sum_{\substack{j=1\\j \neq r}}^{p-1} \left( \frac{j-r}{p}\right) \zeta_p^{j} - \sum_{j=1}^{p-1}\left(\frac{p-r}{p}\right) \zeta_p^j\right)\\ &= 
        \sum_{j=1}^{p-1} (da_j-1) \zeta_p^j.
    \end{align*}
    So if $\left( \frac{j-r}{p}\right) \neq \left(\frac{p-r}{p}\right)$, (which is guaranteed to happen by the properties of the Legendre symbol), we have $da_j-1=0$, contradicting $d>1$. We conclude that $W_F(b,0) \in \Z$.
    \end{proof}
\end{document}